\renewcommand{\(}{\left\(}
\renewcommand{\)}{\right\)}
\renewcommand{\[}{\left\[}
\renewcommand{\]}{\right\]}
\renewcommand{\i}{\infty}
\renewcommand{\pmod}[1]{\,(\textup{mod}\,#1)}
\numberwithin{equation}{section}
\theoremstyle{plain}
\newtheorem{theorem}{Theorem}[section]
\newtheorem{corollary}[theorem]{Corollary}
\newtheorem{remark}[]{Remark}
\def\proof{\@ifnextchar[{\@oproof}{\@nproof}}
\def\@oproof[#1][#2]{\trivlist\item[\hskip\labelsep\textit{#2 Proof of\
		#1.}~]\ignorespaces}
\def\@nproof{\trivlist\item[\hskip\labelsep\textit{Proof.}~]\ignorespaces}
\def\@tocline#1#2#3#4#5#6#7{\relax
	\ifnum #1>\c@tocdepth % then omit
	\else
	\par \addpenalty\@secpenalty\addvspace{#2}%
	\begingroup \hyphenpenalty\@M
	\@ifempty{#4}{%
		\@tempdima\csname r@tocindent\number#1\endcsname\relax
	}{%
		\@tempdima#4\relax
	}%
	\parindent\z@ \leftskip#3\relax \advance\leftskip\@tempdima\relax
	\rightskip\@pnumwidth plus4em \parfillskip-\@pnumwidth
	#5\leavevmode\hskip-\@tempdima
	\ifcase #1
	\or\or \hskip 1em \or \hskip 2em \else \hskip 3em \fi%
	#6\nobreak\relax
	\dotfill\hbox to\@pnumwidth{\@tocpagenum{#7}}\par% <---- \dotfill -> \hfill
	\nobreak
	\endgroup
	\fi}
\begin{document}

\title[The Rogers-Ramanujan dissection of a theta function]{The Rogers-Ramanujan dissection of a theta function}
\author{Atul Dixit and Gaurav Kumar}
\address{Department of Mathematics, Indian Institute of Technology Gandhinagar, Palaj, Gandhinagar 382355, Gujarat, India} 
\email{adixit@iitgn.ac.in; kumargaurav@iitgn.ac.in}
\thanks{2020 \textit{Mathematics Subject Classification.} Primary 11P84, 33D99; Secondary 05A17, 41A60.\\
	\textit{Keywords and phrases. modular relation, Rogers-Ramanujan functions, mock theta functions, asymptotic analysis}}
	\begin{abstract}
		Page 27 of Ramanujan's Lost Notebook contains a beautiful identity which not only gives, as a special case, a famous modular relation between the Rogers-Ramanujan functions $G(q)$ and $H(q)$ but also a relation between two fifth order mock theta functions and $G(q)$ and $H(q)$. We generalize Ramanujan's relation with the help of a parameter $s$ to get an infinite family of such identities. Our result shows that a theta function can always be ``dissected'' as a finite sum of products of generalized Rogers-Ramanujan functions. Several well-known results are shown to be consequences of our theorem, for example, a generalization of the Jacobi triple product identity and Andrews' relation between two of his generalized third order mock theta functions. We give enough evidence, through asymptotic analysis as well as by other means, to show that the identities we get from our main result for $s>2$ transcend the modular world and hence look difficult to be written in the form of a modular relation. Using asymptotic analysis, we also offer a clinching evidence that explains how Ramanujan may have arrived at his generalized modular relation.
	\end{abstract}
\maketitle
\tableofcontents

\section{Introduction}\label{intro}

Ramanujan's Lost Notebook \cite{lnb} contains several beautiful gems which leave us wondering how one could ever conceive them. An amusing collection of ten such results can be found in \cite{hit parade}.

Although not a part of the list in \cite{hit parade}, there is another gem found on page $27$ of  \cite{lnb}. It is a generalized modular relation which states that for $a\in\mathbb{C}\backslash\{0\}$, and $b\in\mathbb{C}$,
\begin{align}\label{gmr}
&\sum_{m=0}^{\infty}\frac{a^{-2m}q^{m^2}}{(bq)_m}\sum_{n=0}^{\infty}\frac{a^nb^nq^{{n^2}/4}}{(q)_n}+\sum_{m=0}^{\infty}\frac{a^{-2m-1}q^{m^2+m}}{(bq)_m}\sum_{n=0}^{\infty}\frac{a^nb^nq^{{(n+1)^2}/4}}{(q)_n}\nonumber\\
&=\frac{1}{(bq)_{\infty}}\sum_{n=-\infty}^{\infty}a^nq^{n^2/4}-(1-b)\sum_{n=1}^{\infty}a^nq^{n^2/4}\sum_{\ell=0}^{n-1}\frac{b^\ell}{(q)_\ell}.
\end{align}
Here, and throughout the paper, we use the standard $q$-series notation
\begin{align*}
	(A)_0 &:=(A;q)_0 =1, \qquad \\
	(A)_n &:=(A;q)_n  = (1-A)(1-Aq)\cdots(1-Aq^{n-1}),
	\qquad n \geq 1, \\
	(A)_{\infty} &:=(A;q)_{\i}  = \lim_{n\to\i}(A;q)_n, \qquad |q|<1.
\end{align*}
What is so striking about \eqref{gmr}? First of all, if we let $a=b=1$, replace $q$ by $q^4$, multiply both sides of the resulting identity by $(-q^2;q^2)_{\infty}$, and use Rogers' identities \cite[pp.~330-331]{rogers2}, namely,
\begin{align}
G(q)&=(-q^2;q^2)_{\infty}\sum_{n=0}^{\infty}\frac{q^{n^2}}{(q^4;q^4)_n},\label{rogers identities1}\\
H(q)&=(-q^2;q^2)_{\infty}\sum_{n=0}^{\infty}\frac{q^{n^2+2n}}{(q^4;q^4)_n},\label{rogers identities2}
\end{align}
where
\begin{align}\label{rrf defn}
G(q):=\sum_{n=0}^{\infty}\frac{q^{n^2}}{(q)_n}\hspace{2mm}\text{and}\hspace{2mm} H(q):=\sum_{n=0}^{\infty}\frac{q^{n^2+n}}{(q)_n}\end{align}
are the Rogers-Ramanujan functions, then one obtains
\begin{equation}\label{mre}
G(q)G(q^4)+qH(q)H(q^4)=\frac{\varphi(q)}{(q^2;q^2)_{\infty}}=(-q;q^2)_{\infty}^{2},
\end{equation}
where 
\begin{equation}\label{varphiq}
\varphi(q):=\sum_{n=-\infty}^{\infty}q^{n^2}.
\end{equation}
 Thus the well-known modular relation \eqref{mre}, also due to Ramanujan \cite[p.~77, Equation (2)]{birch}, \cite[p.~7]{berndt40}, is but a special case of \eqref{gmr}! Hence \eqref{gmr} is called a \emph{generalized modular relation}; see \cite[p.~150]{aar2}. Regarding \eqref{mre}, Andrews \cite[p.~xxi]{lnb} says,\\

\textit{This sort of identity has always appeared to me to lie totally within the realm of modular functions and to be completely resistant to $q$-series generalization. One of the greatest shocks I got from the Lost Notebook was the following assertion...}\\

The assertion Andrews is referring to in the above quote is nothing but \eqref{gmr} with $b=1$ and $q$ replaced by $q^{4}$.

A second striking feature of \eqref{gmr} is that letting $a=b=-1$ in \eqref{gmr}, replacing $q$ by $q^4$, multiplying the resulting two sides by $(-q^2;q^2)_{\infty}$, and then using \eqref{rogers identities1} and \eqref{rogers identities2} yields \cite[p.~28, Equation (2.3.11)]{yesto}\footnote{There are two minor slips in the version stated in \cite{yesto}, namely, the right-hand side should be multiplied by $(-q^2;q^2)_{\infty}$ and the $n$ in the $q-$product in the denominator of the double sum should be $j$.}
\begin{align}\label{gh5mock}
G(q)f_0(q^4)-qH(q)f_1(q^4)=(-q^2;q^4)_{\infty}\sum_{n=-\infty}^{\infty}(-1)^nq^{n^2}+2(-q^2;q^2)_{\infty}\sum_{n, \ell\geq0}\frac{(-1)^nq^{(n+\ell+1)^2}}{(q^4;q^4)_\ell},
\end{align} 
where $f_0(q):=\sum_{n=0}^{\infty}q^{n^2}/(-q;q)_n$ and $f_1(q):=\sum_{n=0}^{\infty}q^{n^2+n}/(-q;q)_n$ are two fifth order mock theta functions of Ramanujan. As rightly pointed out by Andrews \cite[p.~28]{yesto}, there isn't any result of this type in any of the literature on mock theta functions. 

Thus, both \eqref{mre} and \eqref{gh5mock} are special cases of \eqref{gmr}! Before Ramanujan found \eqref{gmr}, there was no precedent for anything of this kind. 

Thirdly, if we let just $b=1$ in \eqref{gmr}, then the second expression on the right-hand side vanishes, and thus one sees that the Jacobi theta function is essentially, that is, modulo the $q$-product $1/(q)_{\infty}$, a sum of products of two generalized Rogers-Ramanujan functions! By a \emph{generalized Rogers-Ramanujan function}, we mean a series of the form
\begin{align}\label{grrf}
\sum_{n=0}^{\infty}\frac{a^{n}q^{(cn^2+dn)/s}}{(bq)_n},
\end{align}
where $a,b\in\mathbb{C}, a\neq0$, $c, d\in\mathbb{R}$, and $s\in\mathbb{N}$.

Andrews \cite[Chapter II]{yesto} gave a detailed account of these identities as well as the aforementioned history surrounding them. In the same article, he proved \eqref{gmr} by showing that the coefficients of $a^{N}, -\infty<N<\infty$, on both sides are identical. See also \cite[p.~150, Section 7.2]{aar2}, where this proof was reproduced. However, this requires knowing the identity in advance.

The current work has its genesis in trying to obtain a natural proof of \eqref{gmr}. Before proceeding in that direction though, it is worthy to mention that on page $26$ of the Lost Notebook, Ramanujan gave the following claim, which, upon changing to the modern notation, reads
\begin{align}\label{asymptotic of product}
	\sum_{n=0}^{\infty}\frac{a^nq^{n^2/(2s)}}{(q)_n}\sum_{n=0}^{\infty}\frac{a^{-ns}	q^{n^2s/2}}{(q)_n}\hspace{7mm}\text{as}\hspace{1mm}q\to1??
\end{align}
When $s=2$, the above product is nothing but the first expression we get on the resulting left-hand side of \eqref{gmr} after letting $b=1$.	Regarding \eqref{asymptotic of product}, Andrews and Berndt \cite[p.~156]{aar2} say, \emph{`Ramanujan provides no indication either why this is of interest for arbitrary $s$ or what the asymptotics should be.'}

In Theorem \ref{asymptotic of product theorem} below, we derive the following asymptotic formula for the product in \eqref{asymptotic of product} for $a>0$, namely, as $q\to 1^{-}$,
\begin{align*}
\sum_{n=0}^{\infty}\frac{a^nq^{n^2/(2s)}}{(q)_n}\sum_{n=0}^{\infty}\frac{a^{-ns}	q^{n^2s/2}}{(q)_n}\sim\frac{\sqrt{s}}{1+(s-1)z_1}\exp{\left(-\frac{1}{\log(q)}\left(\frac{\pi^2}{6}+\frac{s}{2}\log^{2}(a)\right)\right)},
\end{align*}
where $z_1$ is the real positive root of $az^{1/s}+z-1=0$. As explained in the paragraphs preceding Theorem \ref{asymptotic of product theorem} below, the asymptotic formula is elegant, and this is precisely because we have taken the \emph{product} of the two series rather than just one of them. 

However, the fact that the special case $s=2$ of the product in \eqref{asymptotic of product} is the first term on the left-hand side of \eqref{gmr} (with $b=1$) begs to ask the following question - \emph{is there a generalization of \eqref{gmr} for any $s\in\mathbb{N}$ in which the left-hand side of \eqref{asymptotic of product}, generalized by means of a parameter $b$, occurs as the first term of the corresponding generalization of the left-hand side of \eqref{gmr}?} Indeed, the main objective of this paper is to answer this question in the affirmative and then derive a plethora of important and beautiful identities resulting from such a generalization. This shows that \eqref{gmr} is not an isolated result, rather, it is one among an infinite family of representations for the Jacobi theta function!

Our main theorem is as follows.
\begin{theorem}\label{theorem2}
	Let $s\in\mathbb{N}$.	For $a\in\mathbb{C}$, $a\neq 0$, and $b\in\mathbb{C}$,
	\begin{align}\label{theorem 2 eqn}
		\sum_{k=0}^{s-1}\bigg\{\sum_{m=0}^{\infty}\frac{a^{-sm-k}q^{(sm+k)^2/(2s)}}{(bq)_{m}} &\sum_{n=0}^{\infty}\frac{a^{n}b^{n}q^{n(n+2js-2k)/(2s)}}{(q)_{n}} \bigg\} \notag \\ &=
		\frac{1}{(bq)_{\infty}} \sum_{n=-\infty}^{\infty}a^{n}q^{n^2/(2s)} - (1-b) \sum_{n=1}^{\infty}a^{n}q^{n^2/(2s)}  \sum_{\ell=0}^{n-1}\frac{b^\ell}{(q)_{\ell}},
	\end{align}
		where
	\begin{align}\label{j}
		j= \left\{
		\begin{array}{ll}
			0,  & \mbox{if } k = 0, \\
			1, & \mbox{otherwise. }
		\end{array}
		\right. 
	\end{align}
\end{theorem}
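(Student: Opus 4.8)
The plan is to prove \eqref{theorem 2 eqn} by expanding both sides as Laurent series in $a$ and matching the coefficient of $a^{N}$ for every $N\in\mathbb{Z}$. For $|q|<1$ and $b$ fixed, each series occurring in \eqref{theorem 2 eqn} converges absolutely and locally uniformly on $\mathbb{C}\setminus\{0\}$, so both sides are holomorphic in $a$ there and it suffices to compare Laurent coefficients.

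First I would compute the coefficient of $a^{N}$ on the left. Fixing $k$, multiplying the two inner sums, and keeping only the monomials in which the total power of $a$ equals $N$ forces $n=N+sm+k$; a short calculation then shows that the exponent of $q$ in the surviving term is $\tfrac{N^{2}}{2s}+sm^{2}+(N+k+js)m+j(N+k)$, so the coefficient of $a^{N}$ on the left equals $q^{N^{2}/(2s)}S_{N}$, where
\begin{equation*}
S_{N}:=\sum_{k=0}^{s-1}\ \sum_{\substack{m\ge 0\\ N+sm+k\ge 0}}\frac{b^{\,N+sm+k}\,q^{\,sm^{2}+(N+k+js)m+j(N+k)}}{(bq)_{m}\,(q)_{N+sm+k}}.
\end{equation*}
On the right the coefficient of $a^{N}$ is $q^{N^{2}/(2s)}R_{N}$ with $R_{N}=1/(bq)_{\infty}$ for $N\le 0$ and $R_{N}=1/(bq)_{\infty}-(1-b)\sum_{\ell=0}^{N-1}b^{\ell}/(q)_{\ell}$ for $N\ge 1$. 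Using Cauchy's identity $\sum_{\ell\ge 0}b^{\ell}/(q)_{\ell}=1/(b)_{\infty}$ (valid for $|b|<1$; the general case follows by analytic continuation in $b$, or by comparing coefficients of powers of $b$), one rewrites this uniformly as $R_{N}=(1-b)\sum_{\ell\ge N}b^{\ell}/(q)_{\ell}$ for all $N\in\mathbb{Z}$, with the convention $1/(q)_{\ell}:=0$ for $\ell<0$. Thus the theorem reduces to the family of identities $S_{N}=R_{N}$ for all $N\in\mathbb{Z}$ and all $s\in\mathbb{N}$.

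To prove $S_{N}=R_{N}$ I would note that $R_{N}$ satisfies $R_{N}-R_{N+1}=(1-b)b^{N}/(q)_{N}$ and that $S_{N},R_{N}\to 0$ (as formal power series in $b$, or analytically for $|b|$ small) as $N\to+\infty$; hence it is enough to establish
\begin{equation*}
S_{N}-S_{N+1}=\frac{(1-b)\,b^{N}}{(q)_{N}}\qquad(N\in\mathbb{Z}),
\end{equation*}
for then $S_{N}-R_{N}$ is independent of $N$ and therefore identically $0$. The recurrence becomes transparent after reindexing $S_{N}$ by the single variable $p=N+sm+k$, so that $m=\lfloor (p-N)/s\rfloor$ and the exponent of $q$ collapses to $p\lceil (p-N)/s\rceil$, giving $S_{N}=\sum_{p\ge\max(0,N)}b^{p}q^{\,p\lceil(p-N)/s\rceil}/\bigl((bq)_{\lfloor(p-N)/s\rfloor}(q)_{p}\bigr)$. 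Subtracting the analogous expression for $S_{N+1}$ term by term: the value $p=N$ occurs only for $S_{N}$ and contributes $b^{N}/(q)_{N}$; for $p\ge N+1$ the summand difference is nonzero only when $\lceil(p-N)/s\rceil$ or $\lfloor(p-N)/s\rfloor$ jumps, i.e.\ only for $p-N\equiv 0$ or $1\pmod{s}$, and one checks that the contribution of $p-N=st+1$ is exactly the negative of that of $p-N=st$, so these cancel in pairs, leaving only the unpaired $t=0$ term $-b^{N+1}/(q)_{N}$. This gives $S_{N}-S_{N+1}=(1-b)b^{N}/(q)_{N}$ for $N\ge 0$; for $N<0$ the boundary term is absent and the same pairing gives $S_{N}-S_{N+1}=0$, any residual term being killed by $1/(q)_{-1}=0$. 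When $s=1$ the residues $0$ and $1$ coincide and this pairing degenerates; in that case $S_{N}=R_{N}$, i.e.\ $\sum_{m\ge 0}b^{N+m}q^{m(m+N)}/\bigl((bq)_{m}(q)_{N+m}\bigr)=(1-b)\sum_{\ell\ge N}b^{\ell}/(q)_{\ell}$, is a $b$-deformation of the Durfee-rectangle identity, which I would establish separately from the $q$-binomial theorem.

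The principal obstacle is the last step: organizing the reindexed form of $S_{N}$ so that the two families of correction terms (arising from the jumps of the ceiling and of the floor) visibly cancel against each other, and dealing with the degenerate case $s=1$. This is exactly where the specific shape of the $q$-exponents $(sm+k)^{2}/(2s)$ and $n(n+2js-2k)/(2s)$, and the distinguished role played by the residue $k=0$, are used. The remaining ingredients — the extraction of the $a^{N}$-coefficient, the appeal to Cauchy's identity, and the reduction of the case $N\le 0$ — are routine.
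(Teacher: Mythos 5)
Your proposal is correct, and the two computational claims on which it hinges do check out: with $p=N+sm+k$ the $q$-exponent $sm^{2}+(N+k+js)m+j(N+k)$ indeed collapses to $p\lceil (p-N)/s\rceil$ (treat $k=0$, $j=0$ and $1\le k\le s-1$, $j=1$ separately), and for $s\ge 2$ the contribution of $p-N=st$ to $S_{N}-S_{N+1}$, namely $b^{N+st+1}q^{t(N+st+1)}/\bigl((bq)_{t}(q)_{N+st}\bigr)$, is exactly the negative of that of $p-N=st+1$, so the pairing leaves $b^{N}/(q)_{N}-b^{N+1}/(q)_{N}$ for $N\ge 0$ and nothing for $N<0$ (the boundary terms at $p=0$ vanishing via $1/(q)_{-1}=0$). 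However, your route is genuinely different from the paper's. The paper never compares coefficients of $a^{N}$ wholesale: it splits the theta function into the partial theta functions $\sum_{n\ge 1}$ and $\sum_{n\le 0}$ and proves a separate identity for each (Theorems \ref{theorem2-1} and \ref{theorem2-2}), the engine being the combinatorial Durfee-rectangle identity of Theorem \ref{theorem1} and its Corollaries \ref{corollary1} and \ref{corollary2}, with the nonpositive powers handled through the elaborate $G_t$ bookkeeping. Your telescoping recurrence $S_{N}-S_{N+1}=(1-b)b^{N}/(q)_{N}$ replaces that combinatorial input entirely and treats all $N\in\mathbb{Z}$ uniformly; in fact, after the same reindexing, your $S_{N}=R_{N}$ for $N\ge 1$ is equivalent to Corollary \ref{corollary1}, so your argument gives an independent proof of it for $s\ge 2$. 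What the paper's approach buys is exactly what it advertises: a ``natural'' derivation (the authors explicitly wish to avoid Andrews' coefficient-matching proof of \eqref{gmr}, which, like yours, requires knowing the identity in advance), partition-theoretic insight via Durfee rectangles, and the two partial-theta identities as results of independent interest; what yours buys is brevity and a single uniform mechanism. Two loose ends you flagged should indeed be carried out: the degenerate case $s=1$, where $S_{N}=R_{N}$ reduces to classical Durfee square/rectangle identities (for $N\le 0$ it is $\sum_{n\ge 0}z^{n}q^{n^{2}}/\bigl((q)_{n}(zq)_{n}\bigr)=1/(zq)_{\infty}$ with $z=bq^{-N}$), and the passage from $|b|<1$, where the limit $S_{N},R_{N}\to 0$ is justified, to general $b$ by analytic continuation avoiding the poles $b=q^{-i}$.
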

The way we prove this result is by obtaining identities for each of the partial theta functions $\sum_{n=1}^{\infty}a^nq^{n^2/(2s)}$ and $\sum_{n=-\infty}^{0}a^nq^{n^2/(2s)}$ and then adding the corresponding sides of these identities. See Theorems \ref{theorem2-1} and \ref{theorem2-2} in Section \ref{pmr}.

There are a number of formulas in the literature which express a product of theta function with a $q$-series or another theta function as a finite sum of theta functions or their products or powers, for example, Schr\"{o}ter's formula \cite[Theorem 1]{liu-schroter}, Ramanujan's circular summation of theta functions \cite[p.~54]{lnb}, \cite[Equation (1.3)]{son}. 

However, the one we have derived in Theorem \ref{theorem2} is of a different kind wherein the summand involves generalized Rogers-Ramanujan functions of the form \eqref{grrf}. Since letting $b=1$ in \eqref{theorem 2 eqn} leaves us with only the theta function on its right-hand side, we call such a representation\footnote{Note that we have allowed the $q$-product $(q)_{\infty}$ to be present in the definition of the Rogers-Ramanujan dissection.} of the theta function in terms of the generalized Rogers-Ramanujan functions as its \emph{Rogers-Ramanujan dissection}.
 
The proof of Theorem \ref{theorem2} is quite technical and requires quite a bit of bookkeeping as well. It is reserved for Section \ref{pmr}.  Next, we give some of corollaries resulting from Theorem \ref{theorem2}. 

\begin{corollary}\label{ramanujan27}
Ramanujan's identity \eqref{gmr} holds.
\end{corollary}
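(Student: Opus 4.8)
The plan is to obtain \eqref{gmr} as the special case $s = 2$ of Theorem~\ref{theorem2}. First I would put $s = 2$ in \eqref{theorem 2 eqn}: the outer sum then runs over $k \in \{0,1\}$, and by \eqref{j} the accompanying parameter is $j = 0$ for $k = 0$ and $j = 1$ for $k = 1$. I would write these two summands out separately and simplify the quadratic $q$-exponents in each. For $k = 0$ (so $j = 0$) the summand is
\begin{align*}
\sum_{m=0}^{\infty}\frac{a^{-2m}q^{(2m)^2/4}}{(bq)_{m}}\sum_{n=0}^{\infty}\frac{a^{n}b^{n}q^{n^2/4}}{(q)_{n}}
=\sum_{m=0}^{\infty}\frac{a^{-2m}q^{m^2}}{(bq)_{m}}\sum_{n=0}^{\infty}\frac{a^{n}b^{n}q^{n^2/4}}{(q)_{n}},
\end{align*}
which is exactly the first double sum on the left-hand side of \eqref{gmr}.

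For $k = 1$ (so $j = 1$) the summand is $\sum_{m\ge 0}\frac{a^{-2m-1}q^{(2m+1)^2/4}}{(bq)_{m}}\sum_{n\ge 0}\frac{a^{n}b^{n}q^{n(n+2)/4}}{(q)_{n}}$. Here I would use $(2m+1)^2/4 = m^2 + m + \tfrac{1}{4}$ and $n(n+2)/4 = (n+1)^2/4 - \tfrac{1}{4}$, so that the factor $q^{1/4}$ produced by the first inner sum cancels the factor $q^{-1/4}$ produced by the second, giving
\begin{align*}
\sum_{m=0}^{\infty}\frac{a^{-2m-1}q^{m^2+m}}{(bq)_{m}}\sum_{n=0}^{\infty}\frac{a^{n}b^{n}q^{(n+1)^2/4}}{(q)_{n}},
\end{align*}
which is exactly the second double sum on the left-hand side of \eqref{gmr}. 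Since the right-hand side of \eqref{theorem 2 eqn} with $s = 2$ is, term for term, identical to the right-hand side of \eqref{gmr}, adding the $k = 0$ and $k = 1$ contributions produces \eqref{gmr}.

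There is essentially no obstacle here: the entire mathematical content of the corollary is carried by Theorem~\ref{theorem2}, and all that remains is the elementary bookkeeping with the quadratic $q$-exponents in the $k = 1$ term — in particular, observing that the two stray quarter-powers of $q$ cancel so that no fractional power survives.
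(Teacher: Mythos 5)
Your proposal is correct and is exactly the paper's argument: the paper proves the corollary by setting $s=2$ in Theorem~\ref{theorem2} and simplifying, which is precisely the bookkeeping you carry out (the $k=0,1$ terms reducing, after the $q^{\pm 1/4}$ cancellation, to the two double sums of \eqref{gmr}, with identical right-hand sides).
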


Theorem \ref{theorem2} gives, as another special case, a nice generalization of the Jacobi triple identity (see \eqref{jtpi} below), which, to the best of our knowledge, has not appeared in print before.
\begin{corollary}\label{gen jtpi}
	For $a\in\mathbb{C}$, $a\neq 0$, and $b\in\mathbb{C}$,
\begin{align*}
	&\sum_{m=0}^{\infty}\frac{a^{-m}q^{m^2/2}}{(bq)_m}\sum_{n=0}^{\infty}\frac{a^nb^nq^{{n^2}/2}}{(q)_n}=\frac{1}{(bq)_{\infty}} \sum_{n=-\infty}^{\infty}a^{n}q^{n^2/2} - (1-b) \sum_{n=1}^{\infty}a^{n}q^{n^2/2} \sum_{\ell=0}^{n-1}\frac{b^\ell}{(q)_{\ell}}.
	\end{align*}
\end{corollary}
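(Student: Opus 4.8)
The plan is to obtain Corollary~\ref{gen jtpi} as the simplest non-trivial case of Theorem~\ref{theorem2}, namely $s=1$. When $s=1$, the outer sum $\sum_{k=0}^{s-1}$ in \eqref{theorem 2 eqn} collapses to the single term $k=0$, and for $k=0$ the definition \eqref{j} forces $j=0$. The two inner exponents then simplify to $(sm+k)^2/(2s)=m^2/2$ and $n(n+2js-2k)/(2s)=n^2/2$, so the left-hand side of \eqref{theorem 2 eqn} becomes
\begin{equation*}
\sum_{m=0}^{\infty}\frac{a^{-m}q^{m^2/2}}{(bq)_{m}}\sum_{n=0}^{\infty}\frac{a^{n}b^{n}q^{n^2/2}}{(q)_{n}},
\end{equation*}
while the right-hand side is literally unchanged. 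That is already the asserted identity, so the argument is essentially just this substitution; the only points needing attention are the bookkeeping of the half-integer powers of $q$ and the remark that, since $|q|<1$, all the series in sight converge absolutely (the $m$- and $\ell$-sums for every $a\in\mathbb{C}\setminus\{0\}$, the $n$-sum thanks to the Gaussian factor $q^{n^2/2}$), which is what makes the specialization legitimate.

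To explain why this deserves to be called a generalization of the Jacobi triple product identity, I would then specialize further to $b=1$. In that case $(bq)_{m}=(q)_{m}$ and $(bq)_{\infty}=(q)_{\infty}$, the second term on the right-hand side disappears because $1-b=0$, and Euler's identity $\sum_{n\geq 0} q^{n(n-1)/2}z^{n}/(q)_{n}=(-z;q)_{\infty}$, applied with $z=aq^{1/2}$ and with $z=a^{-1}q^{1/2}$ (so that $q^{n(n-1)/2}z^{n}=a^{\pm n}q^{n^2/2}$), turns the two inner series into $(-aq^{1/2};q)_{\infty}$ and $(-a^{-1}q^{1/2};q)_{\infty}$. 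The identity then reads $(q;q)_{\infty}(-aq^{1/2};q)_{\infty}(-a^{-1}q^{1/2};q)_{\infty}=\sum_{n=-\infty}^{\infty}a^{n}q^{n^2/2}$, which is exactly the Jacobi triple product identity \eqref{jtpi} after $q\mapsto q^2$. Thus Corollary~\ref{gen jtpi} is a one-parameter ($b$-)deformation of \eqref{jtpi}.

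I do not expect any genuine obstacle here: the whole statement is a direct substitution into Theorem~\ref{theorem2}, whose proof already carries the analytic burden. The only mildly delicate steps are the exponent bookkeeping at $s=1$ (making sure the two $q$-Gaussians line up with the statement as written) and, for the remark connecting it to \eqref{jtpi}, the correct invocation of Euler's product formula; both are entirely routine.
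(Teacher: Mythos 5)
Your proposal is correct and matches the paper's proof, which is exactly the one-line specialization $s=1$ (so $k=0$, $j=0$) of Theorem \ref{theorem2} followed by the routine simplification of the exponents. Your further remark deriving \eqref{jtpi} via $b=1$ and Euler's identity \eqref{euler2} is also essentially the paper's own subsequent corollary (the paper first replaces $q$ by $q^2$ and then applies \eqref{euler2} with $z=-a^{\pm1}q$, which is the same computation in integral powers of $q$).
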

A beautiful identity for the product of partial theta functions, as opposed to the generalized Rogers-Ramanujan functions occurring in the above identity, and involving three variables $a, b$ and $q$, was obtained by Andrews and Warnaar \cite[Theorem 1.1]{andrews-warnaar}. An interesting generalization of this identity was recently given by Chan, Chan, Chen and Huang \cite[Theorem 1]{chan fine}.\\

\noindent
The special case $s=3$ of Theorem \ref{theorem2} leads to
\begin{corollary}\label{s=3case}
		For $a\in\mathbb{C}\backslash\{0\}$ and $b\in\mathbb{C}$,
\begin{align}\label{s=3mre}
	&\sum_{m=0}^{\infty}\frac{a^{-3m}q^{3m^2/2}}{(bq)_m}\sum_{n=0}^{\infty}\frac{a^nb^nq^{n^2/6}}{(q)_n}+q^{1/6}\sum_{m=0}^{\infty}\frac{a^{-3m-1}q^{3m^2/2+m}}{(bq)_m}\sum_{n=0}^{\infty}\frac{a^nb^nq^{n(n+4)/6}}{(q)_n}\nonumber\\
	&+q^{2/3}\sum_{m=0}^{\infty}\frac{a^{-3m-2}q^{3m^2/2+2m}}{(bq)_m}\sum_{n=0}^{\infty}\frac{a^nb^nq^{n(n+2)/6}}{(q)_n}\nonumber\\
	&=	\frac{1}{(bq)_{\infty}} \sum_{n=-\infty}^{\infty}a^{n}q^{n^2/6} - (1-b) \sum_{n=1}^{\infty}a^{n}q^{n^2/6} \sum_{\ell=0}^{n-1}\frac{b^\ell}{(q)_{\ell}}.
\end{align}	
\end{corollary}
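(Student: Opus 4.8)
The plan is simply to specialize Theorem \ref{theorem2} to $s=3$ and make the three resulting summands on the left-hand side explicit. I would begin by writing out the outer sum $\sum_{k=0}^{s-1}$ for $s=3$, so that $k$ runs through $0,1,2$, and for each value record the corresponding $j$ dictated by \eqref{j}: namely $j=0$ when $k=0$ and $j=1$ when $k\in\{1,2\}$.

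Next I would simplify the exponents appearing in the two inner series for each $k$. In the first inner series the exponent is $(sm+k)^2/(2s)=(3m+k)^2/6=3m^2/2+mk+k^2/6$, which for $k=0,1,2$ equals $3m^2/2$, $\,3m^2/2+m+\tfrac16$, and $3m^2/2+2m+\tfrac23$ respectively; pulling the constant parts $q^{1/6}$ and $q^{2/3}$ out in front produces the visible prefactors in \eqref{s=3mre}. In the second inner series the exponent $n(n+2js-2k)/(2s)=n(n+6j-2k)/6$ evaluates to $n^2/6$ for $k=0$, to $n(n+4)/6$ for $k=1$, and to $n(n+2)/6$ for $k=2$. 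Assembling these three pieces gives exactly the left-hand side of \eqref{s=3mre}, while the right-hand side of Theorem \ref{theorem2} with $s=3$ is verbatim the right-hand side of \eqref{s=3mre}.

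There is no genuine obstacle here: the corollary is an immediate specialization, and its entire content is the elementary bookkeeping of the quadratic and linear exponents. The only point requiring a little care is that the shift parameter $j$ inside the second series depends on $k$ through \eqref{j}, so the $k=0$ term is genuinely different in form from the $k=1,2$ terms; once this is tracked correctly, the identity \eqref{s=3mre} drops out directly from \eqref{theorem 2 eqn}.
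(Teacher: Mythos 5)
Your proposal is correct and is exactly the paper's (implicit) argument: Corollary \ref{s=3case} is obtained by putting $s=3$ in Theorem \ref{theorem2}, tracking $j=0$ for $k=0$ and $j=1$ for $k=1,2$, and simplifying the exponents $(3m+k)^2/6=3m^2/2+mk+k^2/6$ and $n(n+6j-2k)/6$, which yields the prefactors $q^{1/6}$, $q^{2/3}$ and the exponents $n^2/6$, $n(n+4)/6$, $n(n+2)/6$ precisely as you state. Nothing further is needed.
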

In light of the fact that \eqref{mre} can be derived from \eqref{gmr} by letting $a=b=1$ in the latter, the natural thing to try is to see if letting $a=b=1$ in \eqref{s=3mre}, or, for that matter, in \eqref{theorem 2 eqn}, produces something analogous. The key ingredients in obtaining \eqref{mre} from \eqref{gmr} were Rogers' identities  \eqref{rogers identities1} and \eqref{rogers identities2}. However, it is highly unlikely that analogues of these identities exist for $s>2$. A detailed asymptotic analysis done in Section \ref{cr} for $s=3$ fails to point us to a relation of the type in \eqref{rogers identities1}. We now give another reason why such a relation seems implausible for $s>2$. Observe that \eqref{rogers identities1} is, in fact, a special case of Rogers' more general identity \cite[Equation (3)]{bressoud1}
\begin{align*}
\sum_{n=0}^{\infty}\frac{q^{n^2}a^n}{(q)_n}=(-aq^2;q^2)_{\infty}\sum_{n=0}^{\infty}\frac{q^{n^2}a^n}{(q^2;q^2)_n(-aq^2;q^2)_n}.
\end{align*}
Now Bressoud \cite[Equation (5)]{bressoud1} has further generalized the above identity by obtaining, for any $s\in\mathbb{N}$,
\begin{align}\label{bressoud generalization}
\sum_{m=0}^{\infty}\frac{q^{m+sm(m-1)/2}a^m}{(q)_m}=(-aq^s;q^s)_{\infty}\sum_{n_1,\cdots,n_{s-1}\geq0}\frac{a^Nq^{sN(N-1)/2+n_1+2n_2+\cdots+(s-1)n_{s-1}}}{(q^s;q^s)_{n_1}\cdots(q^s;q^s)_{n_{s-1}}(-aq^s;q^s)_N},
\end{align}
where $N=n_1+n_2+\cdots+n_{s-1}$. Observe that if we let $s=3$ and $a=\sqrt{q}$ in the above identity, then its resulting left-hand side, that is, $\sum_{m=0}^{\infty} q^{3m^2/2}/(q)_m$, is nothing but the first series in the first expression\footnote{The series with index of summation $m$ in the other two expressions, and with $a=b=1$, can also be shown to be special cases of the left-hand side of \eqref{bressoud generalization} upon appropriate substitutions.} on the left-hand side of \eqref{s=3mre} with $a=b=1$. However, the appearance of the multi-sum, for $s>2$, on the right-hand side of \eqref{bressoud generalization} presents difficulty in having $\sum_{m=0}^{\infty} q^{3m^2/2}/(q)_m$ related to the second series in that expression with $a=b=1$, that is, to $\sum_{n=0}^{\infty}q^{n^2/6}/(q)_n$, or, to $\sum_{n=0}^{\infty}q^{cn^2/6}/(q^c;q^c)_n$ for some $c\in\mathbb{N}$.

 We note in passing that Bressoud, Santos and Mondek \cite[Theorem 1]{bressoud2} have shown that the special case $a=q^{i-1}, 1\leq i\leq k$, of the left-hand side of \eqref{bressoud generalization} is the generating function of three restricted partition functions, and Lehmer \cite{lehmer} has shown that there is no nice infinite product representation for this generating function for $s>2$ as opposed to the sums with $s=2$ and $i=1, 2$ having infinite product representations which constitute the Rogers-Ramanujan identities.
% if we let $b=1$ in the three series in \eqref{s=3mre} with index of summation $m$, then they too the special cases of the left-hand side of \eqref{bressoud generalization} when $s=3, a=q^{(2i-1)/2}$ with $1\leq i\leq 3$.

Another interesting corollary of Theorem \ref{theorem2} which we derive is a result of Andrews on generalized third order mock theta functions \cite[p.~78, Equation (3b)]{andrews1966} which gives Ramanujan's relation between the third order mock theta functions $\phi(q)$ and $\psi(q)$ in \cite[p.~31]{lnb} as a special case. It is stated below.

\begin{corollary}\label{mock theta gen}
Let $\varphi(q)$ be defined in \eqref{varphiq}. Then,
	\begin{align}\label{mock theta gen eqn}
		\sum_{m=0}^{\infty}\frac{(-1)^mq^{m^2}}{(-bq^2;q^2)_m}=\frac{\varphi(-q)}{(-bq)_{\infty}}+(1+b)\sum_{m=1}^{\infty}\frac{(-1)^{m-1}q^{m^2}}{(-bq;q^2)_m}.
	\end{align}
	In particular, 
	\begin{align}\label{mock theta}
		\phi(q)+2\psi(q)=(-q;q^2)_{\infty}^{3}(q^2;q^2)_{\infty},
	\end{align}
	where, $\phi(q)$ and $\psi(q)$ are two of Ramanujan's third order mock theta functions defined by
	\begin{align*}
		\phi(q):=\sum_{m=0}^{\infty}\frac{q^{m^2}}{(-q^2;q^2)_m}\hspace{4mm}\text{and}\hspace{4mm}\psi(q):=\sum_{m=1}^{\infty}\frac{q^{m^2}}{(q;q^2)_m}.
	\end{align*}
\end{corollary}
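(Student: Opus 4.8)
The plan is to obtain the general identity \eqref{mock theta gen eqn} as a direct specialization of Theorem~\ref{theorem2}, and then to recognize \eqref{mock theta} as the further special case $b=1$ (up to a change of variable $q\mapsto -q$, roughly). First I would look at \eqref{theorem 2 eqn} with $s=2$, which after suitable substitutions for $a$ collapses the left-hand double sum into a single sum over $m$ (the inner sum over $n$ should become a $q$-product by one of the classical $q$-exponential/Cauchy summations, e.g.\ $\sum_n a^nb^nq^{n^2/4}/(q)_n$-type series telescoping via the $q$-binomial theorem or Euler's identities). Concretely, I expect that the choices $a = \di q^{1/2}$ (so that $a^n q^{n^2/4}$ carries the sign pattern producing $(-1)^m$ after the two $k$-terms combine) and $b \mapsto -b$, together with $q\mapsto q^2$, turn the left side into $\sum_{m\ge 0}(-1)^m q^{m^2}/(-bq^2;q^2)_m$. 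Getting the bookkeeping of the $k=0$ and $k=1$ summands to merge correctly into one series is the step that needs care.

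On the right-hand side, the bilateral theta sum $\sum_{n=-\infty}^{\infty}a^n q^{n^2/(2s)}$ under the same substitutions becomes, by the Jacobi triple product \eqref{jtpi}, a product that matches $\varphi(-q)$ (after the $q\mapsto q^2$ scaling) divided by $(-bq)_\infty$ --- here the factor $1/(bq)_\infty$ in \eqref{theorem 2 eqn} supplies the $1/(-bq)_\infty$. The remaining piece $(1-b)\sum_{n\ge 1}a^n q^{n^2/(2s)}\sum_{\ell=0}^{n-1} b^\ell/(q)_\ell$ must reduce to $-(1+b)\sum_{m\ge 1}(-1)^{m-1}q^{m^2}/(-bq;q^2)_m$. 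This is the one genuinely nontrivial manipulation: I would swap the order of summation in the double sum $\sum_{n\ge 1}\sum_{\ell=0}^{n-1}$, reindex $n = \ell + r$ with $r\ge 1$, and evaluate the resulting inner sum over $r$ (a partial theta / geometric-type series with a $q^{n^2/(2s)}$ weight) so that the $1/(q)_\ell$ combines with the leftover to produce $1/(-bq;q^2)_m$-type denominators. The sign $(1-b)$ versus $(1+b)$ and the shift from $(-bq^2;q^2)_m$ to $(-bq;q^2)_m$ should fall out of this reindexing.

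Finally, for \eqref{mock theta} I would set $b=1$ in \eqref{mock theta gen eqn}, killing nothing on the right but turning $(1+b)=2$, and observe that $\sum_{m\ge 0}(-1)^m q^{m^2}/(-q^2;q^2)_m = \phi(-q)$-type expression while $\sum_{m\ge 1}(-1)^{m-1}q^{m^2}/(-q;q^2)_m = \psi(-q)$-type; then replace $q$ by $-q$ and use $\varphi(-(-q)) = \varphi(q)$, $\varphi(q)/(q)_\infty\cdots$ rewritten via $\varphi(q) = (-q;q^2)_\infty^2 (q^2;q^2)_\infty$ to land on $(-q;q^2)_\infty^3(q^2;q^2)_\infty$. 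I expect the main obstacle to be the $b$-dependent double-sum simplification in the previous paragraph: matching the $(-bq;q^2)_m$ denominator and the exact coefficient $(1+b)$ requires choosing the substitution for $a$ so that the partial theta tail sums cleanly, and verifying this may force a small auxiliary lemma (or an appeal to a known partial-theta evaluation such as the one in \cite{andrews-warnaar}). Everything else is substitution plus Jacobi triple product, hence routine.
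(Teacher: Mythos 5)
Your overall strategy (specialize the main theorem, collapse one inner series to an infinite product by an Euler summation, reindex the remaining double sum, use the triple product for the theta part, and finally put $b=1$, $q\mapsto -q$) is the right one, and your last paragraph deducing \eqref{mock theta} is essentially the paper's argument. But the concrete specialization you propose does not work, and this is a genuine gap. The identity \eqref{mock theta gen eqn} is the $s=1$ case of Theorem \ref{theorem2} (i.e.\ Corollary \ref{gen jtpi}) with $a=-1$, $b\mapsto -b$, $q\mapsto q^2$, not the $s=2$ case. With $s=2$ the denominators $(-bq^2;q^2)_m$ force the replacement $q\mapsto q^2$, after which the $m$-series carry exponents $q^{2m^2}$ (and $q^{2m^2+2m+1/2}$ for $k=1$); no fixed choice of $a$, in particular not $a=iq^{1/2}$, can convert $a^{-2m}q^{2m^2}$ into $(-1)^mq^{m^2}$, since $a^{-2m}$ only shifts the exponent linearly in $m$. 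Moreover, for $s=2$ the inner $n$-series have quadratic exponent $n^2/4$ relative to the base, i.e.\ they are genuine Rogers--Ramanujan-type series, not $q$-exponentials, so they do not telescope to infinite products via \eqref{euler} or \eqref{euler2}; consequently the $k=0$ and $k=1$ summands cannot merge into the single series $\sum_{m\ge 0}(-1)^mq^{m^2}/(-bq^2;q^2)_m$. (Indeed, the paper's concluding remarks point out that $s=2$ with $a=b=-1$ leads to the fifth-order identity \eqref{gh5mock}, while the present corollary is the $s=1$, third-order companion.) In the correct $s=1$ specialization the left side is the desired single sum multiplied by $\sum_{n}b^nq^{n^2}/(q^2;q^2)_n=(-bq;q^2)_\infty$, and one divides the whole identity by this product at the end; your claim that the prefactor $1/(bq)_\infty$ alone supplies $1/(-bq)_\infty$ is therefore off --- after the substitutions it is $1/(-bq^2;q^2)_\infty$, and the odd half $1/(-bq;q^2)_\infty$ only appears through that final division.

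Your treatment of the tail term is also oriented the wrong way. Writing $n=m+\ell$ with $m\ge 1$, the sum that admits a closed form is the \emph{inner sum over $\ell$}: one gets $\sum_{\ell\ge 0}(bq^{2m})^{\ell}q^{\ell^2}/(q^2;q^2)_\ell=(-bq^{2m+1};q^2)_\infty$ by \eqref{euler2} with $q\mapsto q^2$, and then $(-bq^{2m+1};q^2)_\infty/(-bq;q^2)_\infty=1/(-bq;q^2)_m$ produces the denominators in \eqref{mock theta gen eqn}. Your plan to take $\ell$ outside and ``evaluate the resulting inner sum over $r$'' asks for a closed form of a partial theta tail $\sum_{r\ge 1}a^{\ell+r}q^{(\ell+r)^2/2}$, which does not exist; no appeal to the Andrews--Warnaar partial theta product is needed or available here. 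With the specialization corrected to $s=1$, $a=-1$, $b\mapsto -b$, $q\mapsto q^2$ and the reindexing done in the order just described, the proof goes through exactly as in the paper.
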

There are further new corollaries that result from Theorem \ref{theorem2}. One can annihilate the theta function occurring in \eqref{theorem 2 eqn} by letting $a=-q^{-1/(2s)}$, thereby obtaining
\begin{corollary}\label{annihilation of theta}
		Let $s\in\mathbb{N}$. For $b\in\mathbb{C}$,
	\begin{align}\label{theorem 2 eqn cor1}
	\sum_{k=0}^{s-1}\left\{\sum_{m=0}^{\infty}\frac{(-1)^{sm-k}q^{(sm+k)(sm+k+1)/(2s)}}{(bq)_{m}} \sum_{n=0}^{\infty}\frac{(-b)^{n}q^{(n(n+2js-2k)-n)/(2s)}}{(q)_{n}} \right\} \notag \\ =
- (1-b) \sum_{n=1}^{\infty}(-1)^{n}q^{n(n-1)/(2s)} \sum_{\ell=0}^{n-1}\frac{b^\ell}{(q)_{\ell}}.
\end{align}
\end{corollary}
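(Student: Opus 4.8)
The plan is to deduce Corollary \ref{annihilation of theta} directly from Theorem \ref{theorem2} by the substitution $a=-q^{-1/(2s)}$, so the only real work is verifying that each piece of \eqref{theorem 2 eqn} transforms into the corresponding piece of \eqref{theorem 2 eqn cor1}. First I would handle the right-hand side of \eqref{theorem 2 eqn}. The bilateral theta sum becomes $\sum_{n=-\infty}^{\infty}(-1)^nq^{-n/(2s)}q^{n^2/(2s)}=\sum_{n=-\infty}^{\infty}(-1)^nq^{n(n-1)/(2s)}$; pairing the index $n$ with $1-n$ shows the summand at $n$ and at $1-n$ are negatives of each other (since $(-1)^n=-(-1)^{1-n}$ while $n(n-1)=(1-n)(-n)$), so the whole bilateral sum vanishes. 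This is the mechanism by which the theta function is ``annihilated.'' The surviving $(1-b)$-term becomes $-(1-b)\sum_{n=1}^{\infty}(-1)^nq^{-n/(2s)}q^{n^2/(2s)}\sum_{\ell=0}^{n-1}b^\ell/(q)_\ell=-(1-b)\sum_{n=1}^{\infty}(-1)^nq^{n(n-1)/(2s)}\sum_{\ell=0}^{n-1}b^\ell/(q)_\ell$, which is exactly the right-hand side of \eqref{theorem 2 eqn cor1}.

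Next I would substitute $a=-q^{-1/(2s)}$ into the left-hand side of \eqref{theorem 2 eqn}, treating the two $q$-series factors separately. In the first factor, $a^{-sm-k}=(-1)^{sm-k}q^{(sm+k)/(2s)}$ (using $(-1)^{-sm-k}=(-1)^{sm-k}$ since $(-1)^{-1}=-1$), and multiplying by $q^{(sm+k)^2/(2s)}$ gives exponent $\big((sm+k)^2+(sm+k)\big)/(2s)=(sm+k)(sm+k+1)/(2s)$; so the first factor becomes $\sum_{m=0}^{\infty}(-1)^{sm-k}q^{(sm+k)(sm+k+1)/(2s)}/(bq)_m$, matching \eqref{theorem 2 eqn cor1}. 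In the second factor, $a^nb^n=(-1)^nb^nq^{-n/(2s)}=(-b)^nq^{-n/(2s)}$, and combining with $q^{n(n+2js-2k)/(2s)}$ yields exponent $\big(n(n+2js-2k)-n\big)/(2s)$; hence the second factor is $\sum_{n=0}^{\infty}(-b)^nq^{(n(n+2js-2k)-n)/(2s)}/(q)_n$, again matching. Summing over $k$ from $0$ to $s-1$ with $j$ given by \eqref{j} then reproduces the left-hand side of \eqref{theorem 2 eqn cor1}.

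The bookkeeping obstacle, such as it is, lies in tracking the signs carefully: one must be sure that $(-1)^{-sm-k}=(-1)^{sm-k}$ and that the sign in the bilateral-sum cancellation is genuinely $-1$ and not $+1$. These are routine parity checks. I would close by remarking that convergence of all series is inherited from Theorem \ref{theorem2} (the substitution $a=-q^{-1/(2s)}$ is a legitimate specialization since $a\neq0$), and that no rearrangement beyond the termwise comparison above is needed, so the corollary follows immediately.
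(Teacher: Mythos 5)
Your proposal is correct and follows essentially the same route as the paper: substitute $a=-q^{-1/(2s)}$ into Theorem \ref{theorem2}, check the exponents termwise, and observe that the bilateral sum $\sum_{n=-\infty}^{\infty}(-1)^nq^{n(n-1)/(2s)}$ vanishes. The paper invokes the Jacobi triple product identity for this last vanishing, while your pairing $n\leftrightarrow 1-n$ is exactly the ``simple series manipulations'' alternative the paper's footnote mentions, so the two arguments are essentially identical.
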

Indeed, the Jacobi triple product identity (see \eqref{jtpi} below) implies\footnote{One can also establish this evaluation by simple series manipulations.} $\sum_{n=-\infty}^{\infty}(-1)^nq^{n^2-n}=0$ upon which one can replace $q$ by $q^{1/(2s)}$. Thus, the theta function in \eqref{theorem 2 eqn} can be annihilated.

Further, if we let $b=1$ in the above corollary, we obtain
\begin{corollary}\label{rhs zero}
We have
	\begin{align}\label{theorem 2 eqn cor2}
\sum_{k=0}^{s-1}\left\{\sum_{m=0}^{\infty}\frac{(-1)^{sm-k}q^{(sm+k)(sm+k+1)/(2s)}}{(q)_{m}} \sum_{n=0}^{\infty}\frac{(-1)^{n}q^{n(n+2js-2k-1)/(2s)}}{(q)_{n}} \right\} =0.
\end{align}
\end{corollary}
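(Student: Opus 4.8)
The plan is to read off Corollary~\ref{rhs zero} as the $b=1$ specialization of Corollary~\ref{annihilation of theta}. Setting $b=1$ in \eqref{theorem 2 eqn cor1}, the right-hand side is multiplied by $1-b=0$ and hence vanishes identically, irrespective of the partial theta sum $\sum_{n\ge1}(-1)^nq^{n(n-1)/(2s)}\sum_{\ell=0}^{n-1}1/(q)_\ell$. On the left-hand side, each Pochhammer $(bq)_m$ collapses to $(q)_m$, each factor $(-b)^n$ becomes $(-1)^n$, and the exponent $\bigl(n(n+2js-2k)-n\bigr)/(2s)$ in the inner $n$-sum simplifies to $n(n+2js-2k-1)/(2s)$. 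Comparing term by term with \eqref{theorem 2 eqn cor2}, the two sides agree verbatim, which proves the corollary.

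For completeness I would also record the one-step route straight from Theorem~\ref{theorem2}: put $a=-q^{-1/(2s)}$ and $b=1$ simultaneously in \eqref{theorem 2 eqn}. The second term $-(1-b)\sum_{n\ge1}a^nq^{n^2/(2s)}\sum_{\ell=0}^{n-1}b^\ell/(q)_\ell$ on the right vanishes because $1-b=0$, and the theta function becomes
\begin{equation*}
\frac{1}{(q)_{\infty}}\sum_{n=-\infty}^{\infty}(-1)^{n}q^{(n^{2}-n)/(2s)},
\end{equation*}
which is $0$: the involution $n\mapsto 1-n$ fixes the exponent $(n^2-n)/(2s)$ while flipping the sign $(-1)^n$, so the series cancels in pairs (equivalently, replace $q$ by $q^{1/(2s)}$ in the Jacobi triple product evaluation $\sum_{n=-\infty}^{\infty}(-1)^nq^{n^2-n}=0$). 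On the left of \eqref{theorem 2 eqn}, the $m$-factor becomes $(-1)^{sm-k}q^{[(sm+k)^2+(sm+k)]/(2s)}/(q)_m=(-1)^{sm-k}q^{(sm+k)(sm+k+1)/(2s)}/(q)_m$ after using $(-1)^{-sm-k}=(-1)^{sm-k}$, and the $n$-factor becomes $(-1)^nq^{[n(n+2js-2k)-n]/(2s)}/(q)_n$; this is precisely the left-hand side of \eqref{theorem 2 eqn cor2}.

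Since both Theorem~\ref{theorem2} and Corollary~\ref{annihilation of theta} are already in hand, there is no real obstacle here. The only thing that needs care is the exponent bookkeeping in the two substitutions --- in particular, checking $(-1)^{-sm-k}=(-1)^{sm-k}$ and $(-1)^{-n}=(-1)^n$, and combining $q^{(sm+k)^2/(2s)}$ with the power of $q$ coming from $a^{-sm-k}=(-q^{-1/(2s)})^{-sm-k}$ --- together with the trivial observation that the prefactor $1-b$ annihilates the surviving partial theta series at $b=1$.
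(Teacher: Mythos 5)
Your proposal is correct and follows exactly the paper's route: Corollary \ref{rhs zero} is obtained there by setting $b=1$ in Corollary \ref{annihilation of theta}, which in turn is Theorem \ref{theorem2} with $a=-q^{-1/(2s)}$ combined with the Jacobi triple product evaluation $\sum_{n=-\infty}^{\infty}(-1)^nq^{n^2-n}=0$. Your exponent and sign bookkeeping (including $(-1)^{-sm-k}=(-1)^{sm-k}$ and the simplification to $n(n+2js-2k-1)/(2s)$) checks out, so nothing further is needed.
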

A discussion on known as well as new special cases of Corollary \ref{rhs zero} is done in Section \ref{pc}. \\

If we let $s=2, b=-1$ and replace $q$ by $q^2$ in \eqref{theorem 2 eqn cor1}, we obtain
\begin{corollary}\label{cor stacks}
	We have
\begin{align}\label{involving identity from stacks}
&\sum_{m=0}^{\infty}\frac{q^{m(2m+1)}}{(-q^2;q^2)_m}\sum_{n=0}^{\infty}\frac{q^{n(n-1)/2}}{(q^2;q^2)_{n}}-\sum_{m=0}^{\infty}\frac{q^{2m^2+3m+1}}{(-q^2;q^2)_m}\sum_{n=0}^{\infty}\frac{q^{n(n+1)/2}}{(q^2;q^2)_{n}}\nonumber\\
&=-2\sum_{n=1}^{\infty}(-1)^nq^{n(n-1)/2}\sum_{\ell=0}^{n-1}\frac{(-1)^{\ell}}{(q^2;q^2)_{\ell}}.
\end{align}
\end{corollary}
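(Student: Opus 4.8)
The plan is simply to specialize Corollary \ref{annihilation of theta}: I would set $s=2$ and $b=-1$ in \eqref{theorem 2 eqn cor1}, and then replace $q$ by $q^2$. No new idea is required; the work is purely a careful bookkeeping of exponents and signs, exactly as flagged in the sentence preceding the statement.

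First I would put $s=2$, so that the outer sum over $k$ has just the two terms $k=0$ (with $j=0$) and $k=1$ (with $j=1$) by \eqref{j}. For $k=0$ the sign $(-1)^{2m}$ is $+1$, the exponent $(2m)(2m+1)/4$ simplifies to $m(2m+1)/2$, and the inner exponent $(n\cdot n-n)/4$ simplifies to $n(n-1)/4$; for $k=1$ the sign $(-1)^{2m-1}$ is $-1$, the exponent $(2m+1)(2m+2)/4$ simplifies to $(2m^2+3m+1)/2$, and the inner exponent $(n(n+2)-n)/4$ simplifies to $n(n+1)/4$. Thus the left-hand side of \eqref{theorem 2 eqn cor1} becomes
\begin{align*}
\sum_{m=0}^{\infty}\frac{q^{m(2m+1)/2}}{(bq)_{m}}\sum_{n=0}^{\infty}\frac{(-b)^{n}q^{n(n-1)/4}}{(q)_{n}}-\sum_{m=0}^{\infty}\frac{q^{(2m^2+3m+1)/2}}{(bq)_{m}}\sum_{n=0}^{\infty}\frac{(-b)^{n}q^{n(n+1)/4}}{(q)_{n}},
\end{align*}
while the right-hand side becomes $-(1-b)\sum_{n=1}^{\infty}(-1)^{n}q^{n(n-1)/4}\sum_{\ell=0}^{n-1}b^{\ell}/(q)_{\ell}$.

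Next I would set $b=-1$, which turns $(bq)_m$ into $(-q;q)_m$, makes the weights $(-b)^n=1$ disappear from the inner series, gives $1-b=2$, and replaces $b^{\ell}$ by $(-1)^{\ell}$. Finally the substitution $q\mapsto q^2$ doubles all the (possibly half-integral) exponents, sending $q^{m(2m+1)/2}$ to $q^{m(2m+1)}$, $q^{(2m^2+3m+1)/2}$ to $q^{2m^2+3m+1}$, $q^{n(n-1)/4}$ to $q^{n(n-1)/2}$, $q^{n(n+1)/4}$ to $q^{n(n+1)/2}$, and each of $(-q;q)_m$, $(q)_n$, $(q)_{\ell}$ to $(-q^2;q^2)_m$, $(q^2;q^2)_n$, $(q^2;q^2)_{\ell}$ respectively. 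The resulting identity is precisely \eqref{involving identity from stacks}.

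I do not expect any genuine obstacle. The only point that needs a moment's care is that \eqref{theorem 2 eqn cor1}, like \eqref{theorem 2 eqn}, is an identity in the variable $q^{1/(2s)}$, so the half-integral exponents appearing after the $s=2$ step are legitimate; the final substitution $q\mapsto q^2$ then clears all the fractional exponents, leaving an honest power series identity in $q$.
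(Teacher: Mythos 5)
Your proposal is correct and is exactly the paper's own derivation: the paper obtains Corollary \ref{cor stacks} precisely by setting $s=2$, $b=-1$ and replacing $q$ by $q^2$ in \eqref{theorem 2 eqn cor1}, and your exponent and sign computations for the $k=0$ and $k=1$ terms check out. Nothing further is needed.
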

The series $\sum_{n=0}^{\infty}q^{\frac{n(n+1)}{2}}/(q^2;q^2)_{n}$ is the principal object of study in \cite{andrews-stacks}, and also appears in four important identities in Ramanujan's Lost Notebook, two of which are quite difficult to prove; see for example \cite[p.~25]{lnb}, \cite{andrews-stacks}, \cite[p.~168--172]{aar2}. We note that Verma and Jain have not only given another proof of these identities of Ramanujan proved by Andrews but have also generalized them; see \cite[Equations (1.10), (1.11)]{verma-jain}.

The series $\displaystyle\sum_{n=0}^{\infty}\frac{(\pm q^{1/2})^{\pm n}q^{n(n-1)/2}}{(q^2;q^2)_{n}}$ are special cases of the series $\displaystyle\sum_{n=0}^{\infty}\frac{z^nq^{n^2/2}}{(q^2;q^2)_{n}}$ which is a $q$-analogue of Airy function and which arises as a hypergeometric solution of a $q$-Painlev\'{e} equation \cite[2512, Equation (3.37)]{kajiwara}.

\section{Preliminaries}\label{prelim}

For $|z|<1$, Euler derived the identity \cite[p.~9, Equation (1.3.6)]{ntsr}
\begin{align}\label{euler}
\sum_{\ell=0}^{\infty}\frac{z^{\ell}}{(q)_{\ell}}=\frac{1}{(z)_\infty}.
\end{align}
Another result of Euler we will need in the sequel is \cite[p.~9, Equation (1.3.7)]{ntsr}
\begin{align}\label{euler2}
\sum_{n=0}^{\infty}\frac{(-z)^nq^{n(n-1)/2}}{(q)_n}=(z)_\infty\hspace{7mm}(|z|<\infty).	
\end{align}
Ramanujan \cite[p.~359]{lnb} (see also \cite[Equation (1.9)]{mcintosh}, \cite[p.~269]{ramanujanIV}) obtained the following asymptotic formula for the series $\sum_{n=0}^{\infty}a^nq^{bn^2+cn}/(q)_n$.

\begin{theorem}\label{ramanujan asymptotic dilog}
Let $a, b, c$, and $q$ be real numbers such that $a>0, b>0$ and $|q|<1$. Let $z$ denote the positive root of $az^{2b}+z-1=0$. Then as $q\to1^{-}$,
\begin{align*}
\sum_{n=0}^{\infty}\frac{a^nq^{bn^2+cn}}{(q)_n}\sim\frac{z^c}{\sqrt{z+2b (1-z)}}\exp{\left(-\frac{1}{\log(q)}\left(\textup{Li}_{2}(az^{2b})+b\log^{2}(z)\right)\right)},
\end{align*}
where the $\textup{Li}_2(z)$ is the dilogarithm function defined for $|z|<1$, by $\textup{Li}_{2}(z):=\sum_{n=1}^{\infty}z^n/n^2$, and for any $z\in\mathbb{C}$ by $\textup{Li}_2(z):=-\int_{0}^{z}\log(1-u)/u\, du$.
\end{theorem}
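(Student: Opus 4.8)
The plan is a saddle-point (Laplace) analysis in the variable $t$ defined by $q=e^{-t}$, $t\to 0^{+}$; thus $\log q=-t$ and $-1/\log q=1/t$. Write the $n$-th term of the series as $e^{\phi_n(t)}$, where
\begin{equation*}
\phi_n(t)=n\log a-t(bn^2+cn)-\log(q)_n .
\end{equation*}
The crucial ingredient is a uniform asymptotic expansion of $\log(q)_n$ on the scale $n\asymp 1/t$. Starting from $\log(q)_n=\log(q)_\infty-\sum_{j>n}\log(1-e^{-jt})$ and combining the classical (modular / pentagonal-number) asymptotics $(q)_\infty\sim\sqrt{2\pi/t}\,e^{-\pi^2/(6t)}$ with an Euler--Maclaurin estimate of the tail --- which is harmless here since $nt\asymp 1$ stays away from the singularity at $0$ --- together with $\int_x^{\infty}\log(1-e^{-u})\,\dd u=-\operatorname{Li}_2(e^{-x})$, one obtains, with $x:=nt$,
\begin{equation*}
-\log(q)_n=\frac1t\left(\frac{\pi^2}{6}-\operatorname{Li}_2(e^{-x})\right)-\frac12\log(2\pi/t)-\frac12\log(1-e^{-x})+o(1),
\end{equation*}
uniformly for $x$ in compact subsets of $(0,\infty)$; establishing this rigorously is the technical heart of the argument.

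Substituting into $\phi_n$ and rescaling $n=x/t$ yields $\phi_n=t^{-1}F(x)-cx-\tfrac12\log(2\pi/t)-\tfrac12\log(1-e^{-x})+o(1)$, where
\begin{equation*}
F(x)=x\log a-bx^2+\frac{\pi^2}{6}-\operatorname{Li}_2(e^{-x}).
\end{equation*}
Since $\tfrac{\dd}{\dd x}\operatorname{Li}_2(e^{-x})=\log(1-e^{-x})$, the stationarity condition $F'(x_0)=0$ reads $ae^{-2bx_0}=1-e^{-x_0}$; comparing with $az^{2b}+z-1=0$ identifies $e^{-x_0}=z$, i.e.\ $x_0=-\log z$ (which is the unique interior critical point when $a,b>0$). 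A short computation gives $F''(x_0)=-2b-\tfrac{z}{1-z}$, so $-(1-z)F''(x_0)=z+2b(1-z)>0$ and $x_0$ is a strict interior maximum.

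Next I would apply Laplace's method: approximate $\sum_{n\ge0}e^{\phi_n}$ by $\tfrac1t\int e^{\phi(x)}\,\dd x$ (Euler--Maclaurin, the ranges $|n-x_0/t|\gg t^{-1/2}$ being negligible by the strict concavity of $F$ near $x_0$ together with monotonicity in the far tails), and evaluate the resulting Gaussian integral, noting that the $x$-derivatives of the $o(t^{-1})$ part of $\phi$ are negligible beside $t^{-1}F''(x_0)$. Using $e^{\phi(x_0)}=z^{c}\sqrt{t/(2\pi)}\,(1-z)^{-1/2}e^{F(x_0)/t}$ (here $e^{-cx_0}=z^{c}$), this gives
\begin{equation*}
\sum_{n=0}^{\infty}\frac{a^nq^{bn^2+cn}}{(q)_n}\sim\frac1t\cdot z^{c}\sqrt{\frac{t}{2\pi}}\,(1-z)^{-1/2}e^{F(x_0)/t}\cdot\sqrt{\frac{2\pi t}{-F''(x_0)}}=\frac{z^{c}\,e^{F(x_0)/t}}{\sqrt{z+2b(1-z)}},
\end{equation*}
all powers of $t$ and of $2\pi$ cancelling.

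It remains to put $F(x_0)$ in the form stated in the theorem. Using $az^{2b}=1-z$ and Euler's dilogarithm reflection formula $\operatorname{Li}_2(z)+\operatorname{Li}_2(1-z)=\tfrac{\pi^2}{6}-\log z\log(1-z)$ gives $\tfrac{\pi^2}{6}-\operatorname{Li}_2(z)=\operatorname{Li}_2(az^{2b})+\log z\,(\log a+2b\log z)$, whence
\begin{equation*}
F(x_0)=-\log z\log a-b\log^2 z+\operatorname{Li}_2(az^{2b})+\log z\log a+2b\log^2 z=\operatorname{Li}_2(az^{2b})+b\log^2 z .
\end{equation*}
Since $F(x_0)/t=-\tfrac{1}{\log q}\left(\operatorname{Li}_2(az^{2b})+b\log^2 z\right)$, the claimed asymptotic follows. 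The genuine obstacle in all of this is the rigorous justification of the uniform expansion of $\log(q)_n$ and the attendant error control in the Laplace step (the tails and the continuum approximation); once those are in hand, the identification of the constants is routine bookkeeping with the dilogarithm.
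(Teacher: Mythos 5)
Your proposal is correct, but note that the paper does not prove this theorem at all: it is quoted as Ramanujan's result, with the paper pointing to Berndt's book for two published proofs (one following Meinardus, the other being McIntosh's). Your saddle-point/Laplace argument is essentially McIntosh's proof that the paper cites: the bookkeeping checks out (the saddle at $e^{-x_0}=z$ from $az^{2b}+z-1=0$, $-F''(x_0)=2b+z/(1-z)$ giving the factor $\sqrt{z+2b(1-z)}$, and the reflection formula reducing $F(x_0)$ to $\textup{Li}_2(az^{2b})+b\log^2 z$), and the two steps you flag as the technical heart — the uniform Euler--Maclaurin expansion of $\log(q)_n$ on the scale $nt\asymp 1$ and the error control in the Laplace step — are precisely what the cited sources establish, so your sketch is a faithful reconstruction of the known proof rather than a new route.
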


\section{Proof of the main result}\label{pmr}
We now derive a result whose first equality gives an identity of Ramanujan as a corollary in the special case $s=2$. This result is proved combinatorially.
\begin{theorem}\label{theorem1}
	Let $s\in\mathbb{N}$. For any complex numbers $a,b$ and $q$ such that $|q|<1$,
	\begin{align}\label{thm3.1}
		\sum_{n=0}^{\infty}\frac{a^{n}b^{n}q^{n^2}}{(aq)_{n}(bq)_{n}} =
		\sum_{k=0}^{s-1}\sum_{n=0}^{\infty}\frac{a^{sn+k}b^{n+j}q^{(sn+k)(n+j)}}{(aq)_{n}(bq)_{sn+k}} =
		1+b\sum_{n=1}^{\infty}\frac{a^{n}q^{n}}{(bq)_{n}},
	\end{align}
	where $j$ is defined in \eqref{j}.
\end{theorem}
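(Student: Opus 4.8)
The plan is to prove the two equalities in \eqref{thm3.1} separately, starting from the middle expression, since it is the one that encodes the $s$-dissection. For the \emph{second} equality, I would interchange the order of summation and carry out the sum over $k$. Writing $m = sn+k$ so that $k$ runs over $0,\dots,s-1$ while $n$ runs over $0,1,2,\dots$, the double sum $\sum_{k=0}^{s-1}\sum_{n\ge 0}$ is just the sum over all integers $m\ge 0$, with $n=\lfloor m/s\rfloor$; the exponent $(sn+k)(n+j)$ and the factor $b^{n+j}$ then need to be re-expressed in terms of $m$. This is the step requiring the ``bookkeeping'' the authors warn about: one has to check that $j$ (which is $0$ exactly when $k=0$, i.e.\ when $s\mid m$) makes the telescoping work. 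In fact I expect the cleaner route is to recognize the middle sum as a telescoping series: the summand for index $n$ (after summing over $k$) should differ from a simple closed form by the summand for index $n$ shifted, collapsing to $1 + b\sum_{n\ge 1} a^n q^n/(bq)_n$. Concretely, one shows $\sum_{k=0}^{s-1}\frac{a^{sn+k}b^{n+j}q^{(sn+k)(n+j)}}{(aq)_n(bq)_{sn+k}}$ can be written as a difference $F(n) - F(n+1)$ (plus the tail term), where $F$ is built from $\frac{a^{sn}b^n q^{sn\cdot n}}{(aq)_n(bq)_{sn}}$-type blocks, so that the total sum telescopes.

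For the \emph{first} equality, relating $\sum_{n\ge 0}\frac{a^n b^n q^{n^2}}{(aq)_n(bq)_n}$ to the middle $s$-dissected sum, the natural tool is a finite $q$-series identity that splits the index $n$ of the left sum into residue classes mod $s$. I would look to derive it from a known transformation, e.g.\ Heine-type or the $q$-analogue of Euler's identities \eqref{euler}, \eqref{euler2} quoted in the Preliminaries, or more likely from a direct combinatorial/partition-theoretic argument: interpret $\frac{a^n b^n q^{n^2}}{(aq)_n(bq)_n}$ as a generating function for pairs of partitions into parts bounded appropriately (a Durfee-square type dissection), and then regroup. Since the authors explicitly say ``This result is proved combinatorially,'' I would set up a bijection: the Durfee square / staircase decomposition of a partition $\lambda$ whose Young diagram has a certain $n\times n$ square, refined by recording $n \bmod s$. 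The exponent $n^2$ on the left becoming $(sn+k)(n+j)$ in the middle is the signature of replacing an $n\times n$ square by an $(sn+k)\times(n+j)$ rectangle — so I would find the geometric decomposition of the Ferrers board that naturally produces such rectangles when the side length is written as $sn+k$.

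The main obstacle, and where I would spend the most care, is matching the $q$-Pochhammer denominators across the dissection: on the left we have $(aq)_n(bq)_n$, but in the $k$-th piece of the middle sum we have $(aq)_n(bq)_{sn+k}$ — the $b$-Pochhammer has grown from length $n$ to length $sn+k$ while the $a$-Pochhammer stays at length $n$. Combinatorially this asymmetry means the two partitions in the pair are being decomposed along different axes of the same board (one along rows, one along columns, relative to a non-square rectangle), so the bijection is not symmetric in $a$ and $b$, which is exactly why the roles of $s$ and $1$ appear unequally. I would verify the identity first for small $s$ (say $s=1$, trivial, and $s=2$, which must recover Ramanujan's case underlying \eqref{gmr}) to pin down the correct decomposition, then formulate the general bijection. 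As a sanity check throughout I would track the coefficient of each monomial $a^A b^B$: on the left it is $\sum_{n}[\,q^{n^2}/((aq)_n(bq)_n)\,]$ restricted to $a^{A}b^{B}$, and this must equal the corresponding coefficient extracted from the middle and right sides, giving a purely numerical partition identity to confirm. Once the combinatorial decomposition is set up correctly, both equalities should follow by collecting terms, with the right-hand side $1 + b\sum_{n\ge1} a^n q^n/(bq)_n$ emerging as the contribution of partitions with empty (respectively, single-column) Durfee structure.
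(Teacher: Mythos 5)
Your plan converges, for the substantive part of the theorem, on the paper's own method: the paper proves only the \emph{first} equality of \eqref{thm3.1}, and does so combinatorially by showing that both sides generate all partitions with $b$ tracking the number of parts and $a$ the largest part --- the left side via the usual $n\times n$ Durfee square, the middle via a Durfee-\emph{rectangle} dissection; the second equality is not proved directly at all, but follows because the equality of the two extreme sides is Andrews' result (cited from his book), so transitivity finishes the job. Your observation that the exponent $(sn+k)(n+j)$ is the area of a rectangle replacing the $n\times n$ square is exactly the right signature, so in spirit you are on the paper's track.

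The gap is that the decisive construction is deferred rather than given, and one of your proposed routes would not work as stated. First, the middle sum is \emph{not} obtained by splitting the index of the left-hand sum into residue classes mod $s$: it is not a termwise rearrangement of the left sum (compare the powers of $b$, which are $b^{n}$ on the left but $b^{n+j}$ with $n\approx C/s$ in the middle), but a genuinely different dissection of the same set of partitions, so any attempt to match terms by $n\bmod s$ fails. What must be supplied --- and is essentially the whole proof in the paper --- is the explicit family of admissible rectangles together with the side conditions that make the dissection exhaustive and disjoint: rectangles with $n$ rows and $sn$ columns (the $k=0$ terms) and with $n+1$ rows and $sn+k$ columns for $1\le k\le s-1$ (equivalently, a rectangle with $C$ columns has $\lceil C/s\rceil$ rows), where the partition $\pi_1$ to the right of the rectangle is confined to at most $n$ rows \emph{in both cases} (this restriction, with the rectangle having $n+1$ rows in the second case, is what makes the cases non-overlapping and captures every partition), and the partition $\pi_2$ below has parts at most the width $sn+k$; these choices produce precisely the summand $a^{sn+k}b^{n+j}q^{(sn+k)(n+j)}/\bigl((aq)_{n}(bq)_{sn+k}\bigr)$. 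Until that is pinned down, ``formulate the general bijection'' is still the entire task. Two smaller corrections: the right-hand side $1+b\sum_{n\ge1}a^nq^n/(bq)_n$ is not the contribution of an ``empty/single-column Durfee structure'' inside the same dissection; it is the degenerate decomposition by the largest part alone (a single top row, the $s\to\infty$ limit of the rectangles), which is exactly Andrews' identity that the paper invokes rather than reproves. And the telescoping you hope for after summing over $k$ is neither needed nor apparent; once the combinatorial interpretation of all three expressions is in place, both equalities follow simultaneously.
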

\begin{proof}
The equality of the extreme sides of \eqref{thm3.1} is a result of Andrews who proved it in \cite[p.~24, Corollary 2.2]{yesto}. The proof of the first equality is an adaptation of the proof of the identity given in \cite[p.~21, Theorem 2.1]{yesto} which is the special case $s=2$ of our first equality in \eqref{thm3.1}.

We show that coefficient of $b^ma^rq^n$ on both sides of the first equality is the number of partitions of $n$ into $m$ parts with the largest part $r$. This is easy to see for the sum $\displaystyle\sum_{n=0}^{\infty}\frac{a^{n}b^{n}q^{n^2}}{(aq)_{n}(bq)_{n}}$ by considering a partition with the Durfee square of size $n\times n$ and by noting that $1/(bq)_n$ generates the partition below the Durfee square whereas $1/(aq)_n$ does the same for the partition to the right of the Durfee square.

We now show that the sum $\displaystyle\sum_{k=0}^{s-1}\sum_{n=0}^{\infty}\frac{a^{sn+k}b^{n+j}q^{(sn+k)(n+j)}}{(aq)_{n}(bq)_{sn+k}}$, where $j$ is defined in \eqref{j}, generates the same partitions alluded to at the beginning of the previous paragraph.

First, consider a Durfee rectangle of $n$ rows and $sn$ columns of nodes. We attach a partition $\pi_1$ to the right of these $sn^2$ nodes with at most $n$ nodes in each column. Also, we attach a partition $\pi_2$ below the Durfee rectangle with at most $sn$ nodes in each row. Thus, the total number of parts in the entire partition is 
\begin{equation*}
n+\text{the number of parts of}\hspace{1mm}\pi_2
\end{equation*}
 whereas the largest part of the partition is 
 \begin{equation*}
 sn+\text{the number of columns of}\hspace{1mm}\pi_1.
\end{equation*}
   Thus, the generating function for all such partitions is 
   \begin{equation}\label{k=0} \frac{a^{sn}b^{n}q^{sn^2}}{(aq)_{n}(bq)_{sn}}.
\end{equation}
We also need to consider partitions with Durfee rectangles with  $n+1$ rows and $sn+k$ columns of nodes, for all $k$ such that $1\leq k\leq s-1$ to get all partitions claimed to be enumerated by the coefficient of $b^ma^rq^n$. Note that in this case, the partition $\pi_1$ will have at most $n$ nodes in each column while the partition $\pi_2$ will have at most $sn+k$ nodes in each row. Thus the generating function associated with the concerned Durfee rectangles is
\begin{equation}\label{other k}
 \sum_{k=1}^{s-1}\frac{a^{sn+k}b^{n+1}q^{(sn+k)(n+1)}}{(aq)_{n}(bq)_{sn+k}}.
 \end{equation}

If we now sum \eqref{k=0} as well as \eqref{other k} over $n$ from $0$ to $\infty$ and add the resulting sides, we obtain 
\begin{align*}
\sum_{n=0}^{\infty}\sum_{k=0}^{s-1}\frac{a^{sn+k}b^{n+j}q^{(sn+k)(n+j)}}{(aq)_{n}(bq)_{sn+k}},
\end{align*}
which is the required generating function. This proves the first equality in \eqref{thm3.1}.
\end{proof}

We will be requiring the following two corollaries of Theorem \ref{theorem1} in the sequel.
\begin{corollary}\label{corollary1}
	Let $s, n\in\mathbb{N}$. For any complex numbers $b$ and $q$ such that $|q|<1$,
	\begin{align}\label{cor3.2}
		\sum_{k=0}^{s-1}\sum_{m=0}^{\infty}\frac{b^{sm+k}q^{(sm+k+n)(m+j)}}{(bq)_{m}(q^{n+1})_{sm+k}} =
		1+\sum_{m=1}^{\infty}\frac{b^{m}q^{m+n}}{(q^{n+1})_{m}},
	\end{align}
	where $j$ is defined in \eqref{j}.
\end{corollary}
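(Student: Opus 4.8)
The plan is to deduce Corollary~\ref{corollary1} from Theorem~\ref{theorem1} by a single specialization of the parameters; no new combinatorial argument is needed. First I would rename the summation variable in the second and third members of \eqref{thm3.1} from $n$ to $m$, so as to free the letter $n$ for the parameter occurring in the corollary. This rewrites the pertinent part of \eqref{thm3.1} as
\begin{equation*}
\sum_{k=0}^{s-1}\sum_{m=0}^{\infty}\frac{a^{sm+k}b^{m+j}q^{(sm+k)(m+j)}}{(aq)_{m}(bq)_{sm+k}}=1+b\sum_{m=1}^{\infty}\frac{a^{m}q^{m}}{(bq)_{m}},
\end{equation*}
valid for all complex $a,b$ and all $q$ with $|q|<1$, with $j$ as in \eqref{j}.

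Next I would set $a=b$ and $b=q^{n}$ in this identity. This is a legitimate specialization: $q^{n}$ is a complex number, $|q^{n}|=|q|^{n}<1$, and the $q$-products $(q^{n+1})_{sm+k}$ that appear are therefore nonzero, so Theorem~\ref{theorem1} still applies. On the left-hand side, the summand becomes
\begin{equation*}
\frac{b^{sm+k}\,(q^{n})^{m+j}\,q^{(sm+k)(m+j)}}{(bq)_{m}\,(q^{n}\cdot q)_{sm+k}}=\frac{b^{sm+k}q^{(sm+k+n)(m+j)}}{(bq)_{m}(q^{n+1})_{sm+k}},
\end{equation*}
where I have used $(q^{n}\cdot q)_{sm+k}=(q^{n+1})_{sm+k}$ and combined exponents via $(sm+k)(m+j)+n(m+j)=(sm+k+n)(m+j)$; this is exactly the summand on the left of \eqref{cor3.2}. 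On the right-hand side, $1+b\sum_{m\ge1}a^{m}q^{m}/(bq)_{m}$ turns into $1+q^{n}\sum_{m\ge1}b^{m}q^{m}/(q^{n+1})_{m}=1+\sum_{m\ge1}b^{m}q^{m+n}/(q^{n+1})_{m}$, which is the right side of \eqref{cor3.2}. Equating the two members of the specialized identity yields \eqref{cor3.2}, completing the proof. (As a byproduct, the same substitution in the first member of \eqref{thm3.1} records the extra equal form $\sum_{m\ge0}\frac{b^{m}q^{m^{2}+nm}}{(bq)_{m}(q^{n+1})_{m}}$, though it is not needed here.)

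There is essentially no obstacle in this argument; the only points requiring a modicum of care are the two exponent computations displayed above and the observation that the specialization $b\mapsto q^{n}$ keeps us inside the hypothesis $|q|<1$ of Theorem~\ref{theorem1}. The index ranges $0\le k\le s-1$, $m\ge0$ and the piecewise constant $j$ of \eqref{j} are common to both statements and hence need no further comment.
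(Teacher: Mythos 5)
Your proposal is correct and is exactly the paper's argument: the paper also deduces \eqref{cor3.2} from the second equality of Theorem \ref{theorem1} by letting $b=q^{n}$ and then replacing $a$ by $b$ (the same specialization you perform, with the same exponent bookkeeping $(sm+k)(m+j)+n(m+j)=(sm+k+n)(m+j)$). No further comment is needed.
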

\begin{proof}
	From the second equality of Theorem \ref{theorem1}, we have
	\begin{align*}
	\sum_{k=0}^{s-1}\sum_{m=0}^{\infty}\frac{a^{sm+k}b^{m+j}q^{(sm+k)(m+j)}}{(aq)_{m}(bq)_{sm+k}} =
	1+b\sum_{m=1}^{\infty}\frac{a^{m}q^{m}}{(bq)_{m}}.
	\end{align*}
	Now let $b=q^n$ and then replace $a$ by $b$ to arrive at \eqref{cor3.2}.
\end{proof}
\begin{corollary}\label{corollary2}
	For any complex numbers $b$ and $q$ such that $|q|<1$ and $m\in\mathbb{N}$,
	\begin{align*}
		\sum_{k=0}^{s-1}\sum_{n=0}^{\infty}\frac{b^{sn+k}q^{(sn+k)(m+n+j)}}{(bq^{m+1})_{n}(q)_{sn+k}} =
		\frac{1}{(bq^{m+1})_{\infty}},
	\end{align*}
	where $j$ is as in \eqref{j}.
\end{corollary}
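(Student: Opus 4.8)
The plan is to deduce Corollary \ref{corollary2} from Theorem \ref{theorem1} by a suitable specialization, in the same spirit as the proof of Corollary \ref{corollary1}, but choosing the substitutions so that the \emph{middle} member of \eqref{thm3.1} lands exactly on the left-hand side we want.

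First I would set $b=1$ in \eqref{thm3.1}. This replaces $(bq)_{sn+k}$ in the denominator of the middle sum by $(q)_{sn+k}$ and removes the factor $b^{n+j}$ from its numerator, while the right-most member of \eqref{thm3.1} becomes $1+\sum_{n=1}^{\infty}a^{n}q^{n}/(q)_{n}$. Next, with $m\in\mathbb{N}$ the parameter of the corollary, I would replace $a$ by $bq^{m}$. Then $(aq)_{n}=(bq^{m+1})_{n}$, and in the numerator of the middle sum $a^{sn+k}q^{(sn+k)(n+j)}=b^{sn+k}q^{m(sn+k)}q^{(sn+k)(n+j)}=b^{sn+k}q^{(sn+k)(m+n+j)}$; hence the middle member becomes
\begin{align*}
\sum_{k=0}^{s-1}\sum_{n=0}^{\infty}\frac{b^{sn+k}q^{(sn+k)(m+n+j)}}{(bq^{m+1})_{n}(q)_{sn+k}},
\end{align*}
which is precisely the left-hand side of the asserted identity, with $j$ as in \eqref{j}. (Note the order of the two substitutions matters: carrying out $b=1$ first is what produces $bq^{m}$ rather than $q^{m}$ in the next step.)

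It remains to identify the right-hand side. Under the same two substitutions, the right-most member of \eqref{thm3.1} becomes $1+\sum_{n=1}^{\infty}(bq^{m})^{n}q^{n}/(q)_{n}=\sum_{n=0}^{\infty}(bq^{m+1})^{n}/(q)_{n}$, which by Euler's identity \eqref{euler} equals $1/(bq^{m+1})_{\infty}$, completing the argument. The only delicate point is convergence: \eqref{euler}, and likewise the last member of \eqref{thm3.1}, is valid for $|bq^{m+1}|<1$; for the remaining values of $b$ one observes that the left-hand side of the identity converges for every $b$ off the poles of the factors $1/(bq^{m+1})_{n}$ (thanks to the super-exponential factor $q^{(sn+k)(m+n+j)}$), and that $1/(bq^{m+1})_{\infty}$ is meromorphic in $b$, so the identity propagates by analytic continuation. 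I do not anticipate any genuine obstacle here; the whole proof is a two-substitution reduction of Theorem \ref{theorem1} followed by an appeal to Euler's identity.
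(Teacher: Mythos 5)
Your proposal is correct and is essentially the paper's own proof: the paper likewise sets $b=1$ in the second equality of Theorem \ref{theorem1}, then substitutes $a=bq^{m}$, identifies the right-hand side as $\sum_{n=0}^{\infty}(bq^{m+1})^{n}/(q)_{n}$, and concludes via Euler's identity \eqref{euler} together with analytic continuation in $b$. No further comment is needed.
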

\begin{proof}
Let $b=1$ in the second equality of Theorem \ref{theorem1} and then let $a=bq^{m}$ to obtain
\begin{align*}
	\sum_{k=0}^{s-1}\sum_{n=0}^{\infty}\frac{b^{sn+k}q^{(sn+k)(m+n+j)}}{(bq^{m+1})_{n}(q)_{sn+k}} =\sum_{n=0}^{\infty}\frac{(bq^{m+1})^n}{(q)_n}.
	\end{align*}
	The result now follows for $|bq^{m+1}|<1$ by using \eqref{euler} to represent the sum on the right-hand side as an infinite product. The result now follows for all $b$ by analytic continuation.
\end{proof}
\begin{theorem}\label{theorem2-1}
	We have
		\begin{gather} 
		\sum_{k=0}^{s-1}\left\{\sum_{m=0}^{\infty}\frac{a^{-sm-k}q^{(sm+k)^2/(2s)}}{(bq)_{m}} \sum_{n=sm+k+1}^{\infty}\frac{a^{n}b^{n}q^{n(n+2js-2k)/(2s)}}{(q)_{n}} \right\} \notag \\ =
		\frac{1}{(bq)_{\infty}} \sum_{n=1}^{\infty}a^{n}q^{n^2/(2s)} - (1-b) \sum_{n=1}^{\infty}a^{n}q^{n^2/(2s)} \sum_{\ell=0}^{n-1}\frac{b^\ell}{(q)_{\ell}}, \label{firstPart}
	\end{gather}
where $j$ is defined in \eqref{j}.
\end{theorem}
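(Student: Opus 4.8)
The plan is to begin with the left-hand side of \eqref{firstPart}, fuse the two $q$-exponents appearing in each summand into a single one, reindex so that the three summation variables separate, and then recognise the resulting inner sum as exactly an instance of Corollary \ref{corollary1}. What is left afterwards is a short manipulation using Euler's identity \eqref{euler}.

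First I would set $t=sm+k$ and note that a typical summand on the left of \eqref{firstPart} (with $n\ge t+1$) equals $\dfrac{a^{\,n-t}b^{\,n}q^{E/(2s)}}{(bq)_m(q)_n}$ where $E=(sm+k)^2+n(n+2js-2k)$. Since $t-k=sm$, a one-line computation gives $E=(n-t)^2+2sn(m+j)$, so the $q$-exponent of a summand is $\dfrac{(n-t)^2}{2s}+n(m+j)$. Now put $r=n-t$: as $n$ ranges over $n\ge t+1$, the index $r$ ranges over all positive integers with $n=sm+k+r$, and after interchanging the order of summation (legitimate by absolute convergence for $|q|<1$; the appeal to \eqref{euler} made below is first carried out for $|bq|<1$ and the full identity recovered by analytic continuation in $b$, both sides being analytic where $(bq)_{\infty}\ne 0$) the left-hand side of \eqref{firstPart} becomes
$$\sum_{r=1}^{\infty}a^{r}b^{r}q^{r^2/(2s)}\sum_{k=0}^{s-1}\sum_{m=0}^{\infty}\frac{b^{sm+k}q^{(sm+k+r)(m+j)}}{(bq)_m\,(q)_{sm+k+r}}.$$

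The crux is that, upon writing $(q)_{sm+k+r}=(q)_r(q^{r+1})_{sm+k}$ and pulling $1/(q)_r$ out of the inner sum, the remaining double sum over $k$ and $m$ is precisely the left-hand side of Corollary \ref{corollary1} with its parameter equal to $r$. Hence the left-hand side of \eqref{firstPart} equals
$$\sum_{r=1}^{\infty}\frac{a^{r}b^{r}q^{r^2/(2s)}}{(q)_r}\Bigl(1+\sum_{m=1}^{\infty}\frac{b^{m}q^{m+r}}{(q^{r+1})_m}\Bigr).$$
Using $(q^{r+1})_m=(q)_{r+m}/(q)_r$, substituting $N=r+m$ in the arising double sum, and collecting the coefficient of $a^{n}q^{n^2/(2s)}$, the left-hand side of \eqref{firstPart} becomes $\displaystyle\sum_{n=1}^{\infty}a^{n}q^{n^2/(2s)}\Bigl(\frac{b^{n}}{(q)_n}+\sum_{N=n+1}^{\infty}\frac{b^{N}q^{N}}{(q)_N}\Bigr)$. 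On the right-hand side of \eqref{firstPart} I would invoke \eqref{euler} in the form $1/(bq)_\infty=\sum_{N\ge0}b^{N}q^{N}/(q)_N$ and again compare coefficients of $a^{n}q^{n^2/(2s)}$; the whole theorem then collapses to the finite identity
$$\sum_{N=0}^{n-1}\frac{b^{N}q^{N}}{(q)_N}-\frac{b^{n}}{(q)_{n-1}}=(1-b)\sum_{\ell=0}^{n-1}\frac{b^{\ell}}{(q)_\ell}\qquad(n\ge1),$$
which is verified term-by-term via $\dfrac{1}{(q)_\ell}-\dfrac{1}{(q)_{\ell-1}}=\dfrac{q^{\ell}}{(q)_\ell}$.

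I expect the main obstacle to be bookkeeping rather than ideas: one must handle the parameter $j$ and the combination $2js-2k$ correctly when checking $E=(n-t)^2+2sn(m+j)$, and one must make sure that the substitution $r=n-(sm+k)$ genuinely untangles the three sums so that Corollary \ref{corollary1} applies verbatim (its hypothesis $n\in\mathbb N$ being matched by $r\ge1$). Once the left-hand side is brought to the shape $\sum_{r\ge1}\frac{a^{r}b^{r}q^{r^2/(2s)}}{(q)_r}$ times the right-hand side of Corollary \ref{corollary1}, everything remaining is routine $q$-series algebra together with \eqref{euler}.
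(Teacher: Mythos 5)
Your proposal is correct and is essentially the paper's own argument: the substitution $r=n-(sm+k)$ (the paper writes it as replacing $n$ by $n+sm+k$), the interchange of summation so that Corollary \ref{corollary1} applies with parameter $r$, and the final appeal to \eqref{euler} together with $q^{\ell}/(q)_{\ell}=1/(q)_{\ell}-1/(q)_{\ell-1}$ are exactly the steps used there. The only cosmetic difference is that you finish by isolating the coefficient of $a^{n}$ and verifying a finite identity, whereas the paper massages the resulting sum directly into the stated right-hand side; the computations are the same.
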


\begin{proof}
Let $S_1$ denote the left-hand side of \eqref{firstPart}. Replacing $n$ by $n+sm+k$ in the sum over $n$ in $S_1$, interchanging the order of summation, and invoking  Corollary \ref{corollary1}, we see that
\begin{align*}
S_1&=\sum_{k=0}^{s-1}\left\{\sum_{m=0}^{\infty}\frac{a^{-sm-k}q^{(sm+k)^2/(2s)}}{(bq)_{m}} \sum_{n=1}^{\infty}\frac{a^{n+sm+k}b^{n+sm+k}q^{(n+sm+k)(n+sm+2js-k)/(2s)}}{(q)_{n+sm+k}} \right\}\nonumber\\
&=\sum_{n=1}^{\infty} \frac{a^{n}b^{n}q^{n^2/(2s)}}{(q)_n} \left\{\sum_{k=0}^{s-1}\sum_{m=0}^{\infty}\frac{b^{sm+k}q^{(sm+k+n)(m+j)}}{(bq)_{m}(q^{n+1})_{sm+k}} \right\}\nonumber\\
&=\sum_{n=1}^{\infty} \frac{a^{n}b^{n}q^{n^2/(2s)}}{(q)_n} \left\{1+\sum_{m=1}^{\infty}\frac{b^{m}q^{m+n}}{(q^{n+1})_{m}} \right\}\nonumber\\
&=\sum_{n=1}^{\infty} a^{n}q^{n^2/(2s)} \left\{\frac{b^n}{(q)_n}+\sum_{\ell=n+1}^{\infty}\frac{b^{\ell}q^{\ell}}{(q)_{\ell}} \right\},
\end{align*}
where, in the last step, we let $m=\ell-n$. Next, using \eqref{euler} with $z=bq$, we see that
\begin{align*}
S_1&=\sum_{n=1}^{\infty} a^{n}q^{n^2/(2s)} \left\{\frac{b^n}{(q)_n}+\frac{1}{(bq)_{\infty}}-\sum_{\ell=0}^{n}\frac{b^{\ell}q^{\ell}}{(q)_{\ell}} \right\}\nonumber\\
&=\sum_{n=1}^{\infty} a^{n}q^{n^2/(2s)} \left\{\frac{b^n}{(q)_n}+\frac{1}{(bq)_{\infty}}-\sum_{\ell=0}^{n}\frac{b^{\ell}}{(q)_{\ell}}+\sum_{\ell=0}^{n}\frac{b^{\ell}}{(q)_{\ell-1}} \right\}\nonumber\\
&=\sum_{n=1}^{\infty} a^{n}q^{n^2/(2s)} \left\{\frac{1}{(bq)_{\infty}}-\sum_{\ell=0}^{n-1}\frac{b^{\ell}}{(q)_{\ell}}+\sum_{\ell=1}^{n}\frac{b^{\ell}}{(q)_{\ell-1}} \right\},
\end{align*}
where, in the penultimate step, we used the elementary identity $q^\ell/(q)_{\ell}=1/(q)_\ell-1/(q)_{\ell-1}$, and in the ultimate step, we used the fact $1/(q)_{-1}=0$. Now combine the two finite sums over $\ell$ to arrive at \eqref{firstPart} after simplification.
\end{proof}

\begin{theorem}\label{theorem2-2}
	We have
	\begin{gather}
		\sum_{k=0}^{s-1}\left\{\sum_{m=0}^{\infty}\frac{a^{-sm-k}q^{(sm+k)^2/(2s)}}{(bq)_{m}} \sum_{n=0}^{sm+k}\frac{a^{n}b^{n}q^{n(n+2js-2k)/(2s)}}{(q)_{n}} \right\} =
		\frac{1}{(bq)_{\infty}} \sum_{n=-\infty}^{0}a^{n}q^{n^2/(2s)}, \label{secondPart}
	\end{gather}
where $j$ is defined in \eqref{j}.
\end{theorem}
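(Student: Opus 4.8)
The plan is to follow the template of the proof of Theorem~\ref{theorem2-1}: reverse the (now finite) inner sum, interchange the order of summation, and reduce everything to Corollary~\ref{corollary2}. Write $S_2$ for the left-hand side of \eqref{secondPart}. In the inner sum, which runs over $0\le n\le sm+k$, substitute $n\mapsto sm+k-\nu$, so that $0\le\nu\le sm+k$; a computation in the same vein as the one in the proof of Theorem~\ref{theorem2-1} shows that the combined power of $a$ becomes $a^{-\nu}$ and that the combined power of $q$, namely $\bigl((sm+k)^2+n(n+2js-2k)\bigr)/(2s)$, collapses to $\nu^2/(2s)+(m+j)(sm+k-\nu)$. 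Interchanging the two sums — summing first over $\nu\ge 0$ and then over the pairs $(k,m)$ with $sm+k\ge\nu$ — then yields
\begin{align*}
S_2=\sum_{\nu=0}^{\infty}a^{-\nu}q^{\nu^2/(2s)}\,U_\nu,\qquad
U_\nu:=\sum_{k=0}^{s-1}\ \sum_{\substack{m\ge 0\\ sm+k\ge \nu}}\frac{b^{\,sm+k-\nu}\,q^{(m+j)(sm+k-\nu)}}{(bq)_m\,(q)_{sm+k-\nu}}.
\end{align*}
Since the right-hand side of \eqref{secondPart} equals $\dfrac{1}{(bq)_\infty}\displaystyle\sum_{n\le 0}a^n q^{n^2/(2s)}=\dfrac{1}{(bq)_\infty}\displaystyle\sum_{\nu\ge 0}a^{-\nu}q^{\nu^2/(2s)}$, the theorem will follow once we show that $U_\nu=\dfrac{1}{(bq)_\infty}$ for every integer $\nu\ge 0$.

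To this end, set $p=sm+k-\nu$; since $m=\lfloor(sm+k)/s\rfloor$ and $m+j=\lceil(sm+k)/s\rceil$, this collapses $U_\nu$ to the single-index form $U_\nu=\sum_{p\ge 0}b^p q^{\,p\lceil(p+\nu)/s\rceil}\big/\bigl((bq)_{\lfloor(p+\nu)/s\rfloor}(q)_p\bigr)$. When $\nu=0$, rewriting $p=sn+k$ with $0\le k\le s-1$ identifies $U_0$ with the instance of Corollary~\ref{corollary2} in which the free parameter is $0$, so $U_0=\dfrac1{(bq)_\infty}$. It then remains to prove that $U_\nu=U_{\nu-1}$ for all $\nu\ge 1$. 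The key observation is that $\lceil(p+\nu)/s\rceil$ differs from $\lceil(p+\nu-1)/s\rceil$ exactly when $p\equiv 1-\nu\pmod s$, while $\lfloor(p+\nu)/s\rfloor$ differs from $\lfloor(p+\nu-1)/s\rfloor$ exactly when $p\equiv -\nu\pmod s$ — two distinct residues mod $s$ since $s\ge 2$. Hence $U_{\nu-1}-U_\nu$ reduces to a difference of two series, each indexed by a single arithmetic progression in $p$; applying the elementary $q$-Pochhammer identities $(1-q^p)/(q)_p=1/(q)_{p-1}$ and $1/(bq)_{r-1}-1/(bq)_r=-bq^{r}/(bq)_{r}$, and then matching the terms of the two progressions (after a shift of the summation index by one, noting that the lowest term of the first progression drops out when $p\equiv 0$), makes them cancel term by term, so $U_{\nu-1}=U_\nu$. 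Combining the two facts gives $U_\nu=\dfrac1{(bq)_\infty}$ for all $\nu\ge 0$, completing the proof. (Adding \eqref{firstPart} and \eqref{secondPart} then gives Theorem~\ref{theorem2}.)

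The hardest part will be the evaluation $U_\nu=1/(bq)_\infty$: the floor and ceiling functions that appear after reversing the inner sum block any direct appeal to Corollary~\ref{corollary2}, so one is forced into the residue-class bookkeeping that makes $U_{\nu-1}-U_\nu$ telescope. Everything else — the substitution $n\mapsto sm+k-\nu$, the simplification of the $q$-exponent, and the interchange of summations — is a routine variant of the manipulations already carried out in the proof of Theorem~\ref{theorem2-1}.
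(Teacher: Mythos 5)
Your proposal is correct in substance but takes a genuinely different route from the paper's. You reverse the inner sum via $n\mapsto sm+k-\nu$, which isolates the coefficient of $a^{-\nu}$, and thereby reduce \eqref{secondPart} to the single family of identities $U_\nu=1/(bq)_\infty$, $\nu\ge0$, proved from the base case $U_0$ (Corollary \ref{corollary2} with parameter $0$, which is also how the paper itself invokes that corollary) plus the telescoping step $U_\nu=U_{\nu-1}$. Your simplification of the exponent is right: with $M=sm+k$ one has $M^2+n(n+2js-2k)=\nu^2+2s(m+j)(M-\nu)$ because $M+js-k=s(m+j)$; and the cancellation you sketch does close up, since the terms with $p\equiv 1-\nu\pmod{s}$ contribute $b^pq^{pf}/\bigl((q)_{p-1}(bq)_f\bigr)$ with $f=(p+\nu-1)/s$ (the $p=0$ term killed by the factor $1-q^p$), the terms with $p\equiv-\nu\pmod{s}$ contribute $-b^{p+1}q^{(p+1)F}/\bigl((q)_p(bq)_F\bigr)$ with $F=(p+\nu)/s$, and the shift $p\mapsto p-1$ matches these exactly. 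The paper proceeds differently: it splits the inner sum by the residue class of $n$ modulo $s$, regroups $S_2$ into the pieces $G_t$ with $\ell\equiv k+t\pmod{s}$, and evaluates each $G_t$ separately, each use of Corollary \ref{corollary2} (with general parameter $m$) producing one arithmetic-progression slice $\tfrac{1}{(bq)_\infty}\sum_{m\ge0}a^{-sm-s+t}q^{(sm+s-t)^2/(2s)}$ of the partial theta function. Your organization by powers of $a$ trades that heavy $G_t$ bookkeeping for a single invocation of Corollary \ref{corollary2} and an induction in $\nu$; what it gives up is the structural picture of how each residue class contributes its own sub-series of the theta function.

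One small repair is needed: your justification of $U_\nu=U_{\nu-1}$ rests on the residues $1-\nu$ and $-\nu$ being distinct modulo $s$, which fails for $s=1$, a case the theorem includes (it underlies Corollary \ref{gen jtpi}). For $s=1$ both the floor and the ceiling jump at every $p$; splitting the difference of consecutive terms as $\dfrac{q^{p(p+\nu-1)}(1-q^p)}{(bq)_{p+\nu-1}}-\dfrac{bq^{(p+1)(p+\nu)}}{(bq)_{p+\nu}}$ and shifting $p\mapsto p+1$ in the first resulting series yields the same cancellation, so the method survives, but this case should be treated explicitly.
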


\begin{proof}
Let $S_2$ denote the left-hand side of \eqref{secondPart}.
Fix $k$ such that $0\leq k\leq s-1$, and consider the finite sum over $n$ on the left-hand side of \eqref{secondPart}. Split this sum according to the residue classes of $n$ modulo $s$, that is, replace $n$ by $sn+\ell$, where $0\leq\ell\leq s-1$, to get
\begin{gather*}
	\sum_{n=0}^{sm+k}\frac{a^{n}b^{n}q^{n(n+2js-2k)/(2s)}}{(q)_{n}} =
	\sum_{\ell=0}^{s-1}\sum_{n=0}^{m-i}\frac{a^{sn+\ell}b^{sn+\ell}q^{(sn+\ell)(sn+\ell+2js-2k)/(2s)}}{(q)_{sn+\ell}},
\end{gather*}
where
\begin{align}\label{i}
	i= \left\{
	\begin{array}{ll}
		0,  & \mbox{if } 0\le \ell \le k, \\
		1, & \mbox{if } \ell>k. 
	\end{array}
	\right. 
\end{align}
Thus,
\begin{align*}
	S_2=\sum_{k=0}^{s-1}\sum_{m=0}^{\infty}\frac{a^{-sm-k}q^{(sm+k)^2/(2s)}}{(bq)_{m}} \left( \sum_{l=0}^{s-1}\sum_{n=0}^{m-i}\frac{a^{sn+l}b^{sn+l}q^{(sn+l)(sn+l+2js-2k)/(2s)}}{(q)_{sn+l}} \right). 
\end{align*}
We now change the order of summation in the above sum in the following way. For $0\leq t\leq s-1$, define $G_t$ to be that part of $S_2$ for which $\ell\equiv(k+t)\pmod{s}$. In view of \eqref{i} and the fact that $0\leq k,\ell\leq s-1$ and $0<t\leq s-1$, we see that
\begin{align*}
	G_0 &:= \sum_{k=0}^{s-1}\left\{\sum_{m=0}^{\infty}\frac{a^{-sm-k}q^{(sm+k)^2/(2s)}}{(bq)_{m}} \sum_{n=0}^{m}\frac{a^{sn+k}b^{sn+k}q^{(sn+k)(sn+2js-k)/(2s)}}{(q)_{sn+k}} \right\},\\
 	G_t &:= \sum_{k=0}^{s-t-1}\left\{\sum_{m=0}^{\infty}\frac{a^{-sm-k}q^{(sm+k)^2/(2s)}}{(bq)_{m}} \sum_{n=0}^{m-1}\frac{a^{sn+k+t}b^{sn+k+t}q^{(sn+k+t)(sn+2js+t-k)/(2s)}}{(q)_{sn+k+t}} \right\}  \notag\\
 	&\quad+\sum_{k=s-t}^{s-1}\left\{\sum_{m=0}^{\infty}\frac{a^{-sm-k}q^{(sm+k)^2/(2s)}}{(bq)_{m}} \sum_{n=0}^{m}\frac{a^{sn+k+t-s}b^{sn+k+t-s}q^{(sn+k+t-s)(sn+2js+t-s-k)/(2s)}}{(q)_{sn+k+t-s}} \right\}.
 \end{align*}
For the case $s=6$, the distribution of the terms of $S_2$ into each of $G_t$ with respect to the different values of $k$ and $\ell$ is illustrated in the diagram below.
\begin{figure}[hbt!]
	\begin{center}
		\includegraphics[width=0.40\textwidth]{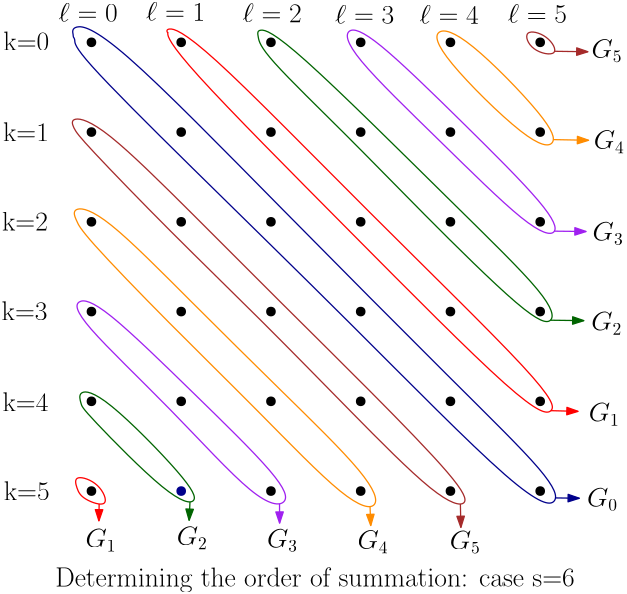}
	\end{center}
\end{figure}

Therefore,
\begin{align}
S_2=\sum_{t=0}^{s-1} G_t. \label{gtSum}
\end{align}
The next task is to evaluate $G_t$ for $0\leq t\leq s-1$. We begin with $G_0$. 

Interchanging the order of summation of the sums over $m$ and $n$, and then replacing $m$ by $m+n$ in the second step, we find that
\begin{align}
G_0 &= \sum_{k=0}^{s-1}\left\{\sum_{n=0}^{\infty}\sum_{m=n}^{\infty}\frac{a^{sn-sm}b^{sn+k}q^{((sm+k)^{2}+(sn+k)(sn+2js-k))/(2s)}}{(bq)_{m}(q)_{sn+k}} \right\}\nonumber\\
&=	\sum_{m=0}^{\infty} \frac{a^{-sm}q^{(sm)^2/(2s)}}{(bq)_m} \left\{\sum_{k=0}^{s-1}\sum_{n=0}^{\infty}\frac{b^{sn+k}q^{(sn+k)(n+m+j)}}{(bq^{m+1})_{n}(q)_{sn+k}} \right\}\nonumber\\
&=\sum_{m=0}^{\infty} \frac{a^{-sm}q^{(sm)^2/(2s)}}{(bq)_m} \frac{1}{(bq^{m+1})_{\infty}}  \notag\\
&=\frac{1}{(bq)_{\infty}} \sum_{m=0}^{\infty}a^{-sm}q^{(sm)^2/(2s)}, \label{G0value}
\end{align}
where, in the penultimate step, we used Corollary \ref{corollary2}.

To obtain a simplified expression for $G_t, 0<t\leq s-1$, interchange the order of summation in each of the double sums over $m$ and $n$ to obtain
\begin{align}
	G_t &= \sum_{k=0}^{s-t-1}\left\{\sum_{n=0}^{\infty} \sum_{m=n+1}^{\infty}\frac{a^{sn-sm+t}b^{sn+k+t}q^{((sm+k)^{2}+(sn+k+t)(sn+2js+t-k))/(2s)}}{(bq)_{m}(q)_{sn+k+t}} \right\}  \notag\\
&\quad+\sum_{k=s-t}^{s-1}\left\{\sum_{n=0}^{\infty} \sum_{m=n}^{\infty}\frac{a^{sn-sm+t-s}b^{sn+k+t-s}q^{((sm+k)^{2}+(sn+k+t-s)(sn+2js+t-s-k))/(2s)}}{(bq)_{m}(q)_{sn+k+t-s}} \right\}\notag\\
 &= \sum_{k=0}^{s-t-1}\left\{ \sum_{n=0}^{\infty} \sum_{m=0}^{\infty}\frac{a^{-sm-s+t}b^{sn+k+t}q^{((sm+sn+s+k)^{2}+(sn+k+t)(sn+2js+t-k))/(2s)}}{(bq)_{m+n+1}(q)_{sn+k+t}} \right\}  \notag\\
&\quad+\sum_{k=s-t}^{s-1}\left\{ \sum_{n=0}^{\infty} \sum_{m=0}^{\infty}\frac{a^{-sm-s+t}b^{sn+k+t-s}q^{((sm+sn+k)^{2}+(sn+k+t-s)(sn+2js+t-s-k))/(2s)}}{(bq)_{m+n}(q)_{sn+k+t-s}} \right\} \notag\\
&=\sum_{m=0}^{\infty} \frac{a^{-sm-s+t}q^{(sm+s-t)^2/(2s)}}{(bq)_m} \Bigg\{ \sum_{k=0}^{s-t-1} \sum_{n=0}^{\infty}\frac{b^{sn+k+t}q^{(m+n+j+1)(sn+k+t)}}{(bq^{m+1})_{n+1}(q)_{sn+k+t}} \notag\\ &\quad+\sum_{k=s-t}^{s-1} \sum_{n=0}^{\infty} \frac{b^{sn+k+t-s}q^{(m+n+j)(sn+k+t-s)}}{(bq^{m+1})_{n}(q)_{sn+k+t-s}}  \Bigg\}. \label{gteval}
\end{align}
Now let $0<t<s-1$. We now separate $k=0$ term from the first double sum in the curly braces and $k=s-t$ term from the second, use the elementary fact $\frac{1}{1-bq^{m+n+1}}=1+\frac{bq^{m+n+1}}{1-bq^{m+n+1}}$ in the resulting first double sum, and recall the definition of $j$ from \eqref{j} to get
\begin{align}
	G_t &= \sum_{m=0}^{\infty} \frac{a^{-sm-s+t}q^{(sm+s-t)^2/(2s)}}{(bq)_m} \Bigg\{ \sum_{n=0}^{\infty}\frac{b^{sn+t}q^{(m+n+1)(sn+t)}}{(bq^{m+1})_{n+1}(q)_{sn+t}}  +  \sum_{k=1}^{s-t-1} \sum_{n=0}^{\infty}\frac{b^{sn+k+t}q^{(m+n+2)(sn+k+t)}}{(bq^{m+1})_{n}(q)_{sn+k+t}} \nonumber\\
	&\quad+ \sum_{k=1}^{s-t-1} \sum_{n=0}^{\infty}\frac{b^{sn+k+t+1}q^{(m+n+2)(sn+k+t)+m+n+1}}{(bq^{m+1})_{n+1}(q)_{sn+k+t}} + \sum_{n=0}^{\infty} \frac{b^{sn}q^{(m+n+1)(sn)}}{(bq^{m+1})_{n}(q)_{sn}}\nonumber\\
	&\quad+ \sum_{k=s-t+1}^{s-1} \sum_{n=0}^{\infty} \frac{b^{sn+k+t-s}q^{(m+n+1)(sn+k+t-s)}}{(bq^{m+1})_{n}(q)_{sn+k+t-s}}  \Bigg\}. \label{gtAB}
\end{align}
Now define $S_3$ to be the sum of the second and third double sums in the curly braces of \eqref{gtAB}, that is, let
\begin{align*}
S_3:=\sum_{k=1}^{s-t-1} \sum_{n=0}^{\infty}\frac{b^{sn+k+t}q^{(m+n+2)(sn+k+t)}}{(bq^{m+1})_{n}(q)_{sn+k+t}} + \sum_{k=1}^{s-t-1} \sum_{n=0}^{\infty}\frac{b^{sn+k+t+1}q^{(m+n+2)(sn+k+t)+m+n+1}}{(bq^{m+1})_{n+1}(q)_{sn+k+t}} .
\end{align*}
Using $q^{sn+k+t}=1-(1-q^{sn+k+t})$, we see that $S_3$ can be put in the form
\begin{align*}
S_3&=\sum_{k=1}^{s-t-1} \sum_{n=0}^{\infty}\frac{b^{sn+k+t}q^{(m+n+1)(sn+k+t)}}{(bq^{m+1})_{n}(q)_{sn+k+t}} - \sum_{k=1}^{s-t-1} \sum_{n=0}^{\infty}\frac{b^{sn+k+t}q^{(m+n+1)(sn+k+t)}}{(bq^{m+1})_{n}(q)_{sn+k+t-1}} \notag\\
&\quad+ \sum_{k=1}^{s-t-1} \sum_{n=0}^{\infty}\frac{b^{sn+k+t+1}q^{(m+n+1)(sn+k+t+1)}}{(bq^{m+1})_{n+1}(q)_{sn+k+t}} - \sum_{k=1}^{s-t-1} \sum_{n=0}^{\infty}\frac{b^{sn+k+t+1}q^{(m+n+1)(sn+k+t+1)}}{(bq^{m+1})_{n+1}(q)_{sn+k+t-1}}.
\end{align*}
Next, combine the second and the fourth double sum on the right-hand side of the above equation using $1+\frac{bq^{m+n+1}}{1-bq^{m+n+1}}=\frac{1}{1-bq^{m+n+1}}$ so as to have
\begin{align*}
S_3&=\sum_{k=1}^{s-t-1} \sum_{n=0}^{\infty}\frac{b^{sn+k+t}q^{(m+n+1)(sn+k+t)}}{(bq^{m+1})_{n}(q)_{sn+k+t}} - \sum_{k=1}^{s-t-1} \sum_{n=0}^{\infty}\frac{b^{sn+k+t}q^{(m+n+1)(sn+k+t)}}{(bq^{m+1})_{n+1}(q)_{sn+k+t-1}} \notag\\
&\quad+ \sum_{k=1}^{s-t-1} \sum_{n=0}^{\infty}\frac{b^{sn+k+t+1}q^{(m+n+1)(sn+k+t+1)}}{(bq^{m+1})_{n+1}(q)_{sn+k+t}}.
\end{align*}
Now we separate the $k=1$ term of the second double sum, then replace $k$ by $k+1$ in the resulting double sum. Similarly, we separate the $k=s-t-1$ term from the third double sum. This has the effect of the resulting two double sums completely getting canceled thereby leading to
\begin{align*}
S_3&=
\sum_{k=1}^{s-t-1} \sum_{n=0}^{\infty}\frac{b^{sn+k+t}q^{(m+n+1)(sn+k+t)}}{(bq^{m+1})_{n}(q)_{sn+k+t}} - \sum_{n=0}^{\infty}\frac{b^{sn+t+1}q^{(m+n+1)(sn+t+1)}}{(bq^{m+1})_{n+1}(q)_{sn+t}} \notag\\
&\quad+ \sum_{n=0}^{\infty}\frac{b^{sn+s}q^{(m+n+1)(sn+s)}}{(bq^{m+1})_{n+1}(q)_{sn+s-1}}.
\end{align*}  
We now substitute the above representation of $S_3$ back in \eqref{gtAB} to get
\begin{align*}
G_t&= \sum_{m=0}^{\infty} \frac{a^{-sm-s+t}q^{(sm+s-t)^2/(2s)}}{(bq)_m} \Bigg\{ \sum_{n=0}^{\infty}\frac{b^{sn+t}q^{(m+n+1)(sn+t)}}{(bq^{m+1})_{n+1}(q)_{sn+t}} 
+ \Bigg(\sum_{k=1}^{s-t-1} \sum_{n=0}^{\infty}\frac{b^{sn+k+t}q^{(m+n+1)(sn+k+t)}}{(bq^{m+1})_{n}(q)_{sn+k+t}} \notag\\
&\quad- \sum_{n=0}^{\infty}\frac{b^{sn+t+1}q^{(m+n+1)(sn+t+1)}}{(bq^{m+1})_{n+1}(q)_{sn+t}} + \sum_{n=0}^{\infty}\frac{b^{sn+s}q^{(m+n+1)(sn+s)}}{(bq^{m+1})_{n+1}(q)_{sn+s-1}}\Bigg)+\sum_{n=0}^{\infty} \frac{b^{sn}q^{(m+n+1)(sn)}}{(bq^{m+1})_{n}(q)_{sn}}\notag\\
&\quad  + \sum_{k=s-t+1}^{s-1} \sum_{n=0}^{\infty} \frac{b^{sn+k+t-s}q^{(m+n+1)(sn+k+t-s)}}{(bq^{m+1})_{n}(q)_{sn+k+t-s}}  \Bigg\}.
\end{align*}
Now combine the first and the third sum in the curly braces, and replace $n$ by $n-1$ in the fourth to have
\begin{align*}
G_t &= \sum_{m=0}^{\infty} \frac{a^{-sm-s+t}q^{(sm+s-t)^2/(2s)}}{(bq)_m} \Bigg\{ \sum_{n=0}^{\infty}\frac{b^{sn+t}q^{(m+n+1)(sn+t)}}{(bq^{m+1})_{n}(q)_{sn+t}} +  \sum_{k=1}^{s-t-1} \sum_{n=0}^{\infty}\frac{b^{sn+k+t}q^{(m+n+1)(sn+k+t)}}{(bq^{m+1})_{n}(q)_{sn+k+t}}\notag\\
&\quad+ \sum_{n=1}^{\infty}\frac{b^{sn}q^{(m+n)(sn)}}{(bq^{m+1})_{n}(q)_{sn-1}} + \sum_{n=0}^{\infty} \frac{b^{sn}q^{(m+n+1)(sn)}}{(bq^{m+1})_{n}(q)_{sn}} + \sum_{k=s-t+1}^{s-1} \sum_{n=0}^{\infty} \frac{b^{sn+k+t-s}q^{(m+n+1)(sn+k+t-s)}}{(bq^{m+1})_{n}(q)_{sn+k+t-s}}  \Bigg\}.
\end{align*}
The first expression in the curly braces can be inducted as the $k=0$ term of the second. Also, since $1/(q)_{-1}=0$, the third sum can be made to start from $n=0$, and can then be combined with the fourth sum using $1+\frac{q^{sn}}{1-q^{sn}}=\frac{1}{1-q^{sn}}$. This leads to
\begin{align*}
	G_t &= \sum_{m=0}^{\infty} \frac{a^{-sm-s+t}q^{(sm+s-t)^2/(2s)}}{(bq)_m} \Bigg\{\sum_{k=0}^{s-t-1} \sum_{n=0}^{\infty}\frac{b^{sn+k+t}q^{(m+n+1)(sn+k+t)}}{(bq^{m+1})_{n}(q)_{sn+k+t}} + \sum_{n=0}^{\infty} \frac{b^{sn}q^{(m+n)(sn)}}{(bq^{m+1})_{n}(q)_{sn}} \notag\\
	&\quad+ \sum_{k=s-t+1}^{s-1} \sum_{n=0}^{\infty} \frac{b^{sn+k+t-s}q^{(m+n+1)(sn+k+t-s)}}{(bq^{m+1})_{n}(q)_{sn+k+t-s}}  \Bigg\}\nonumber\\
	&= \sum_{m=0}^{\infty} \frac{a^{-sm-s+t}q^{(sm+s-t)^2/(2s)}}{(bq)_m} \Bigg\{ \sum_{k=t}^{s-1} \sum_{n=0}^{\infty}\frac{b^{sn+k}q^{(m+n+1)(sn+k)}}{(bq^{m+1})_{n}(q)_{sn+k}} +\sum_{n=0}^{\infty} \frac{b^{sn}q^{(m+n)(sn)}}{(bq^{m+1})_{n}(q)_{sn}} \notag\\
	&\quad+ \sum_{k=1}^{t-1} \sum_{n=0}^{\infty} \frac{b^{sn+k}q^{(m+n+1)(sn+k)}}{(bq^{m+1})_{n}(q)_{sn+k}}  \Bigg\},
\end{align*}
where, in the last step, we replaced $k$ by $k-t$ in the first sum, and $k$ by $k+s-t$ in the third.

Now observe an interesting thing -  the expressions in the curly braces now combine to give us the single expression, namely,
\begin{align*}
\sum_{k=0}^{s-1}\sum_{n=0}^{\infty}\frac{b^{sn+k}q^{(sn+k)(m+n+j)}}{(bq^{m+1})_{n}(q)_{sn+k}},
\end{align*}
where $j$ is defined in \eqref{j}. Using Corollary \eqref{corollary2}, we see that
\begin{align}
	G_t &= \sum_{m=0}^{\infty} \frac{a^{-sm-s+t}q^{(sm+s-t)^2/(2s)}}{(bq)_m}\cdot \frac{1}{(bq^{m+1})_{\infty}}  \notag\\
	&= \frac{1}{(bq)_{\infty}} \sum_{m=0}^{\infty}a^{-sm-s+t}q^{(sm+s-t)^2/(2s)}. \label{Gtvalue}
\end{align}
The only thing remaining at this stage is to obtain a simplified expression for $G_{s-1}$. To that end, observe that letting $t=s-1$ in \eqref{gteval},
	\begin{align*}
	G_{s-1} = \sum_{m=0}^{\infty} \frac{a^{-sm-1}q^{(sm+1)^2/(2s)}}{(bq)_m} \left\{ \sum_{n=0}^{\infty}\frac{b^{sn+s-1}q^{(m+n+1)(sn+s-1)}}{(bq^{m+1})_{n+1}(q)_{sn+s-1}} +  \sum_{k=1}^{s-1} \sum_{n=0}^{\infty} \frac{b^{sn+k-1}q^{(m+n+1)(sn+k-1)}}{(bq^{m+1})_{n}(q)_{sn+k-1}}  \right\}.
\end{align*}
We now split the first expression in the curly braces using  $\frac{1}{1-bq^{m+n+1}}=1+\frac{bq^{m+n+1}}{1-bq^{m+n+1}}$, then separate out $k=1$ term of the double sum, and then replace $k$ by $k+1$ so as to get
	\begin{align*}
	G_{s-1} &
	= \sum_{m=0}^{\infty} \frac{a^{-sm-1}q^{(sm+1)^2/(2s)}}{(bq)_m} \left\{ \sum_{n=0}^{\infty}\frac{b^{sn+s-1}q^{(m+n+1)(sn+s-1)}}{(bq^{m+1})_{n}(q)_{sn+s-1}} + \sum_{n=0}^{\infty}\frac{b^{sn+s}q^{(m+n+1)(sn+s)}}{(bq^{m+1})_{n+1}(q)_{sn+s-1}} \right. \notag\\ &\left. \quad+ \sum_{n=0}^{\infty} \frac{b^{sn}q^{(m+n+1)(sn)}}{(bq^{m+1})_{n}(q)_{sn}} +\sum_{k=1}^{s-2} \sum_{n=0}^{\infty} \frac{b^{sn+k}q^{(m+n+1)(sn+k)}}{(bq^{m+1})_{n}(q)_{sn+k}}  \right\}.
\end{align*}
The first expression in the curly braces can now be inducted as the $k=s-1$ term of the fourth expression. Along with replacing $n$ by $n-1$ in the second expression, this gives\footnote{Note that the second expression in the curly braces of \eqref{3.28} can be made to begin at $n=0$ because $1/(q)_{-1}=0$.}
\begin{align}\label{3.28}
	G_{s-1} =& \sum_{m=0}^{\infty} \frac{a^{-sm-1}q^{(sm+1)^2/(2s)}}{(bq)_m} \left\{ \sum_{k=1}^{s-1} \sum_{n=0}^{\infty} \frac{b^{sn+k}q^{(m+n+1)(sn+k)}}{(bq^{m+1})_{n}(q)_{sn+k}} + \sum_{n=0}^{\infty}\frac{b^{sn}q^{(m+n)(sn)}}{(bq^{m+1})_{n}(q)_{sn-1}} \right. \notag\\ &\left. + \sum_{n=0}^{\infty} \frac{b^{sn}q^{(m+n+1)(sn)}}{(bq^{m+1})_{n}(q)_{sn}}  \right\}.
\end{align}
Now combine the last two sums over $n$ in the curly braces using $1+\frac{q^{sn}}{1-q^{sn}}=\frac{1}{1-q^{sn}}$, and then use Corollary \eqref{corollary2} to get
	\begin{align}
	G_{s-1} &= \sum_{m=0}^{\infty} \frac{a^{-sm-1}q^{(sm+1)^2/(2s)}}{(bq)_m} \cdot \frac{1}{(bq^{m+1})_{\infty}}  \notag\\
	&= \frac{1}{(bq)_{\infty}} \sum_{m=0}^{\infty}a^{-sm-1}q^{(sm+1)^2/(2s)}. \label{Gs1value}
\end{align}
Finally, substituting the expressions for $G_0$, $G_t$, where $0<t<s-1$, and $G_{s-1}$ given in \eqref{G0value},\eqref{Gtvalue} and \eqref{Gs1value} respectively into \eqref{gtSum} to get
\begin{align*}
	S_2&=	\frac{1}{(bq)_{\infty}} \left\{ \sum_{m=0}^{\infty}a^{-sm}q^{(sm)^2/(2s)} + \sum_{t=1}^{s-1}\sum_{m=0}^{\infty}a^{-sm-s+t}q^{(sm+s-t)^2/(2s)} \right\}\notag\\
	&=		\frac{1}{(bq)_{\infty}} \left\{ \sum_{m=0}^{\infty}a^{-sm}q^{(sm)^2/(2s)} + \sum_{t=1}^{s-1}\sum_{m=0}^{\infty}a^{-sm-t}q^{(sm+t)^2/(2s)} \right\} \notag\\ 
	&=
	\frac{1}{(bq)_{\infty}} \sum_{t=0}^{s-1}\sum_{m=0}^{\infty}a^{-sm-t}q^{(sm+t)^2/(2s)}\notag\\
	&=
	\frac{1}{(bq)_{\infty}} \sum_{m=0}^{\infty}a^{-m}q^{m^2/(2s)} \notag\\
	&=\frac{1}{(bq)_{\infty}} \sum_{m=-\infty}^{0}a^{m}q^{m^2/(2s)}.
\end{align*}
This completes the proof.
\end{proof}

\begin{proof}[Theorem \textup{\ref{theorem2}}][]
The result follows from adding the corresponding sides of Theorems \ref{theorem2-1} and \eqref{theorem2-2}.	
\end{proof}

\section{Proofs of Corollaries and other deductions}\label{pc}

\begin{proof}[Corollary \textup{\ref{ramanujan27}}][]
	Letting $s=2$ in Theorem \ref{theorem2} leads to \eqref{gmr} upon simplification.
\end{proof}

\begin{proof}[Corollary \textup{\ref{gen jtpi}}][]
Let $s=1$ in Theorem \ref{theorem2} and simplify.
\end{proof}

A special case of Corollary \ref{gen jtpi} yields the Jacobi triple product identity:
\begin{corollary}
	For any complex number $a\neq0$,
	\begin{align}\label{jtpi}
		\sum_{n=-\infty}^{\infty}a^nq^{n^2}=(-aq;q^2)_{\infty}(-a^{-1}q;q^2)_{\infty}(q^2;q^2)_{\infty}.
	\end{align}
\end{corollary}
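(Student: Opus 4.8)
The plan is to deduce the Jacobi triple product identity \eqref{jtpi} from Corollary~\ref{gen jtpi} by the substitution $q\mapsto q^2$ together with $b=1$, and then to recognize each of the two resulting single $q$-series as an infinite product via Euler's identity \eqref{euler2}.

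First I would replace $q$ by $q^2$ throughout Corollary~\ref{gen jtpi} and set $b=1$. Because $1-b=0$, the partial-theta double sum on the right-hand side disappears, while $(bq^2;q^2)_m$ and $(bq^2;q^2)_\infty$ collapse to $(q^2;q^2)_m$ and $(q^2;q^2)_\infty$. This leaves
\begin{align*}
\sum_{m=0}^{\infty}\frac{a^{-m}q^{m^2}}{(q^2;q^2)_m}\sum_{n=0}^{\infty}\frac{a^{n}q^{n^2}}{(q^2;q^2)_n}=\frac{1}{(q^2;q^2)_{\infty}}\sum_{n=-\infty}^{\infty}a^{n}q^{n^2}.
\end{align*}
Note that $b=1$ may be inserted directly, since Corollary~\ref{gen jtpi} holds for every $b\in\mathbb{C}$ and the inner sums $\sum_{\ell=0}^{n-1}b^\ell/(q)_\ell$ are finite, so the right-hand side remains convergent as $b\to1$. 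Moreover the double series on the left converges absolutely for $|q|<1$, which is what lets us treat it as the product of the two single series.

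Next I would evaluate each single series in closed form. Writing $q^{n^2}=q^{n}\,(q^2)^{n(n-1)/2}$, one gets $\sum_{n\ge0}w^{n}q^{n^2}/(q^2;q^2)_n=\sum_{n\ge0}(wq)^{n}(q^2)^{n(n-1)/2}/(q^2;q^2)_n=(-wq;q^2)_{\infty}$ by Euler's identity \eqref{euler2} with $q$ replaced by $q^2$. Taking $w=a^{-1}$ for the sum over $m$ and $w=a$ for the sum over $n$ gives
\begin{align*}
\sum_{m=0}^{\infty}\frac{a^{-m}q^{m^2}}{(q^2;q^2)_m}=(-a^{-1}q;q^2)_{\infty},\qquad \sum_{n=0}^{\infty}\frac{a^{n}q^{n^2}}{(q^2;q^2)_n}=(-aq;q^2)_{\infty}.
\end{align*}
Substituting these into the previous display and multiplying both sides by $(q^2;q^2)_\infty$ yields \eqref{jtpi}.

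There is no substantive obstacle here: the argument is entirely routine once the right specialization ($q\mapsto q^2$, $b=1$) is chosen. The only points deserving a word of justification are the legitimacy of putting $b=1$, the absolute convergence of the double series (so that it factors as a product of the two single series), and the exponent bookkeeping $q^{n^2}=q^{n}(q^2)^{n(n-1)/2}$ that aligns each series with Euler's identity \eqref{euler2} taken to base $q^2$.
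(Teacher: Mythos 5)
Your proposal is correct and follows essentially the same route as the paper: specialize Corollary~\ref{gen jtpi} with $b=1$ and $q\mapsto q^2$ to kill the partial-theta term, then apply Euler's identity \eqref{euler2} with base $q^2$ (i.e., $z=-a^{-1}q$ and $z=-aq$) to turn each single series into an infinite product. The extra remarks on convergence and the exponent identity $q^{n^2}=q^n(q^2)^{n(n-1)/2}$ are fine but not a departure from the paper's argument.
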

\begin{proof}
Let $b=1$ in Corollary \ref{gen jtpi}, then replace $q$ by $q^2$ so that
\begin{align}\label{intmid}
\sum_{m=0}^{\infty}\frac{a^{-m}q^{m^2}}{(q^2;q^2)_m}\sum_{n=0}^{\infty}\frac{a^nq^{{n^2}}}{(q^2;q^2)_n}=\frac{1}{(q^2;q^2)_{\infty}} \sum_{n=-\infty}^{\infty}a^{n}q^{n^2}.
\end{align} 
Now replace $q$ by $q^2$ in Euler's identity \eqref{euler2} and use the resulting identity twice, once with $z=-a^{-1}q$ and then with $z=-aq$,  to represent each infinite sum on the left-hand side of \eqref{intmid} as an infinite product.
\end{proof}
%\begin{proof}[Corollary \textup{\ref{s=3case}}][]
%	Let $s=3$ in Theorem \ref{theorem2} and simplify.
%\end{proof}

\begin{proof}[Corollary \textup{\ref{mock theta gen}}][]
We let $s=1, a=-1$ in \eqref{gmr} and replace $b$ by $-b$ and $q$ by $q^2$ to get
\begin{align}\label{mock1}
\sum_{m=0}^{\infty}\frac{(-1)^mq^{m^2}}{(-bq^2;q^2)_m}\sum_{n=0}^{\infty}\frac{b^nq^{n^2}}{(q^2;q^2)_n}=\frac{1}{(-bq^2;q^2)_{\infty}}\sum_{n=-\infty}^{\infty}(-1)^nq^{n^2}- (1+b) \sum_{n=1}^{\infty}(-1)^{n}q^{n^2} \sum_{\ell=0}^{n-1}\frac{(-b)^\ell}{(q^2;q^2)_{\ell}}.
\end{align}
But
\begin{align}\label{mock2}
\sum_{n=1}^{\infty}(-1)^{n}q^{n^2} \sum_{\ell=0}^{n-1}\frac{(-b)^\ell}{(q^2;q^2)_{\ell}}=\sum_{m=1}^{\infty}(-1)^mq^{m^2}\sum_{\ell=0}^{\infty}\frac{(bq^{2m})^{\ell}q^{\ell^2}}{(q^2;q^2)_{\ell}}.
\end{align}
Now from \eqref{mock1}, \eqref{mock2} and \eqref{euler2} applied twice (once with $q$ replaced by $q^2$ and $z$ replaced by $-bq$, and then once more, with $q$ replaced by $q^2$ and $z$ replaced by $-bq^{2m+1}$) to get
\begin{align*}
(-bq;q^2)_{\infty}\sum_{m=0}^{\infty}\frac{(-1)^mq^{m^2}}{(-bq^2;q^2)_m}=\frac{\varphi(-q)}{(-bq^2;q^2)_{\infty}}-(1+b)(-bq;q^2)_{\infty}\sum_{m=1}^{\infty}\frac{(-1)^{m}q^{m^2}}{(-bq;q^2)_m}.
\end{align*}
The result now follows from dividing both sides by $(-bq;q^2)_{\infty}$.

To obtain \eqref{mock theta} from \eqref{mock theta gen eqn}, let $b=1$ in \eqref{mock theta gen eqn} and replace $q$ by $-q$, then use \eqref{jtpi} with $a=1$ to simplify $\varphi(q)$ and then employ Euler's theorem in the form $(q;-q)_{\infty}=1/(-q;q^2)_{\infty}$.
\end{proof}

\subsection{Special cases of Corollary \ref{rhs zero}} 
If we let $s=1$ in \eqref{theorem 2 eqn cor2}, we obtain
\begin{equation*}
	\sum_{m=0}^{\infty}\frac{(-1)^{m}q^{m(m+1)/2}}{(q)_{m}} \sum_{n=0}^{\infty}\frac{(-1)^{n}q^{n(n-1)/2}}{(q)_{n}}=0,
\end{equation*}
which is trivial to prove since $\sum_{n=0}^{\infty}(-1)^{n}q^{\frac{n(n-1)}{2}}/(q)_{n}=(1)_{\infty}=0$ by Euler's theorem. Next, letting $s=2$ in \eqref{theorem 2 eqn cor2} and then replacing $q$ by $q^2$, we see that
	\begin{align*}
		\sum_{m=0}^{\infty}\frac{q^{m(2m+1)}}{(q^2;q^2)_m}\sum_{n=0}^{\infty}\frac{(-1)^{n}q^{n(n-1)/2}}{(q^2;q^2)_{n}}=\sum_{m=0}^{\infty}\frac{q^{(m+1)(2m+1)}}{(q^2;q^2)_m}\sum_{n=0}^{\infty}\frac{(-1)^{n}q^{n(n+1)/2}}{(q^2;q^2)_{n}}.
	\end{align*}
	This identity also follows trivially by applying an identity of McIntosh twice, once with $\mu=0$\footnote{The identity in the case $\mu=0$ was first obtained by Rogers; see \cite[p.~575, Equation (R1)]{andrews-auluck} and replace $z$ by $-q$ and then $q$ by $\sqrt{q}$ there. It also follows from two identities of Ramanujan, namely, the one in \cite[pp.~59-60]{watson final}, and another in \cite[p.~10]{lnb} (see also \cite[p.~18, Equation (1.4.15)]{aar2}).} and then again with $\mu=1$, and then multiplying the left-hand side of the first with the right-hand side of the other. McIntosh's identity is given by \cite[p.~135]{mcintosh}, \cite[Equation (10)]{mcintosh2}
	\begin{align*}
	\sum_{n=0}^{\infty}\frac{q^{(2n+\mu)(2n+\mu+1)/2}}{(q^2;q^2)_n}=(-q)_\infty\sum_{n=0}^{\infty}\frac{(-1)^nq^{n(n+1)/2-\mu n}}{(q^2;q^2)_n}.
	\end{align*} 
However, for $s>2$, the identities that we get from Corollary \ref{rhs zero}, are, to the best of our knowledge, new. For example, when $s=3$, we have
\begin{align*}
	&\sum_{m=0}^{\infty}\frac{(-1)^mq^{m(3m+1)/2}}{(q)_m}\sum_{n=0}^{\infty}\frac{(-1)^nq^{n (n-1)/6}}{(q)_n}-\sum_{m=0}^{\infty}\frac{(-1)^mq^{(3m+1)(3m+2)/6}}{(q)_m}\sum_{n=0}^{\infty}\frac{(-1)^nq^{n (n+3)/6}}{(q)_n}\nonumber\\
	&+\sum_{m=0}^{\infty}\frac{(-1)^mq^{(m+1)(3m+2)/2}}{(q)_m}\sum_{n=0}^{\infty}\frac{(-1)^nq^{n (n+1)/6}}{(q)_n}=0.
\end{align*}
It would be nice to see if this result can also be derived from known identities as is the case with the identities corresponding to $s=1$ and $2$.

\subsection{A congruence implied by Corollary \ref{cor stacks}}
It was mentioned at the end of the introduction that Ramanujan's Lost Notebook contains four identities involving the sum $\sum_{n=0}^{\infty}q^{\frac{n(n+1)}{2}}/(q^2;q^2)_{n}$ (see \cite[Equations (1.8)-(1.11)]{andrews-stacks}) which occurs in our identity \eqref{involving identity from stacks}. One of them is \cite[Equation (1.8)]{andrews-stacks} is
\begin{align*}
	\sum_{n=0}^{\infty}\frac{q^{n(n+1)/2}}{(q^2;q^2)_{n}}=\frac{1}{(-q;-q)_{\infty}}\left\{\sum_{n=0}^{\infty}\frac{q^{2n^2+n}}{(-q^2;q^2)_n}+2\sum_{n=1}^{\infty}q^{n^2}(-q^2;q^2)_{n-1}\right\}.
\end{align*} 
Substituting the above identity in \eqref{involving identity from stacks} and simplifying leads to
\begin{align*}
	&\sum_{m=0}^{\infty}\frac{q^{m(2m+1)}}{(-q^2;q^2)_m}\left\{\sum_{n=0}^{\infty}\frac{q^{n(n-1)/2}}{(q^2;q^2)_{n}}-\frac{1}{(-q;-q)_{\infty}}\sum_{m=0}^{\infty}\frac{q^{2m^2+3m+1}}{(-q^2;q^2)_m}\right\}\notag\\
	&\quad-\frac{2}{(-q;-q)_{\infty}}\sum_{m=0}^{\infty}\frac{q^{2m^2+3m+1}}{(-q^2;q^2)_m}\sum_{n=1}^{\infty}q^{n^2}(-q^2;q^2)_{n-1}\notag\\
	&=-2\sum_{n=1}^{\infty}(-1)^nq^{n(n-1)/2}\sum_{\ell=0}^{n-1}\frac{(-1)^{\ell}}{(q^2;q^2)_{\ell}}\notag\\
	&=2\sum_{m=0}^{\infty}\frac{q^{m(2m+1)}}{(q^2;q^2)_{2m}}-2\sum_{\ell=1}^{\infty}\sum_{m=0}^{\infty}\frac{(-1)^{\ell}q^{2(\ell+m)^2-(\ell+m)+2\ell^2}}{(q^2;q^2)_{\ell+m-1}(q^4;q^4)_{m}},
\end{align*}
where the last step follows from splitting the double sum over $n$ and $\ell$ according to the parity of $n$ and then employing Lemmas 1 and 2 from \cite{andrews-stacks} (with $\alpha=0$).

While we are unable to further simplify the above identity, the following congruence can obviously be extracted from it:
\begin{align}\label{congruence}
	&\sum_{m=0}^{\infty}\frac{q^{m(2m+1)}}{(-q^2;q^2)_m}\left\{1+\sum_{n=1}^{\infty}\frac{q^{n(n-1)/2}}{(q^2;q^2)_{n}}-\frac{1}{(-q;-q)_{\infty}}\sum_{m=0}^{\infty}\frac{q^{2m^2+3m+1}}{(-q^2;q^2)_m}\right\}\equiv0\pmod{2}.
\end{align}
The series $\sum_{n=1}^{\infty}q^{\frac{n(n-1)}{2}}/(q^2;q^2)_{n}$ occurring in the congruence is the generating function of partitions into distinct parts in which the difference between any two consecutive parts is odd. While the power series coefficients of this sum are tabulated as sequence $A179080$ on OEIS \cite{oeis}, none of the other sums occurring in \eqref{congruence} have their power series coefficients tabulated. 
\section{Asymptotic analysis}
Before Ramanujan proved the Rogers-Ramanujan identities
\begin{align}
	G(q)&=\frac{1}{(q;q^5)_{\infty}(q^4;q^5)_{\infty}},\label{rr1}\\ H(q)&=\frac{1}{(q^2;q^5)_{\infty}(q^3;q^5)_{\infty}},\notag
\end{align}
where $G(q)$ and $H(q)$ are defined in \eqref{rrf defn}, his belief in them stemmed from several pieces of evidence he found for their existence \cite[p.~358, 360--361]{lnb}\footnote{It is to be noted that this is \emph{not} a part of the Lost Notebook, the $139$ handwritten pages found by George E. Andrews, but from the material titled `Other unpublished material including portions of manuscripts'.}. One of them was that both sides of \eqref{rr1} are asymptotically equal to $\exp{\left(\frac{\pi^2}{15(1-q)}\right)}$ as $q\to1^{-}$. 

On page $359$ of \cite{lnb}, Ramanujan obtained an asymptotic formula for the series $	\sum_{n=0}^{\infty}a^{n}q^{bn^2+cn}/(q)_{n}$; see Theorem \ref{ramanujan asymptotic dilog} of this paper. Berndt \cite[pp.~269--284]{ramanujanIV} has given two proofs of this result, the first developed along the lines of Meinardus \cite{meinardus}, and the second being the one given by McIntosh \cite{mcintosh}. McIntosh's proof also gives additional terms in the asymptotic expansion; see \cite[Equation (5.3)]{mcintosh}. This asymptotic formula involves the dilogarithm which, although a very useful function having an extensive theory replete with beautiful formulas, is still a special function. 

On the other hand, if we multiply \emph{two} series of this form, then as shown below, we get an asymptotic formula which is simpler since it involves only elementary functions.

\begin{theorem}\label{asymptotic of product theorem}
Let $a>0$ and $s\in\mathbb{N}$. Let  $z_1$ denote the real positive root of $az^{1/s}+z-1=0$. As $q\to1^{-}$,
\begin{align}\label{prodasy}
	\sum_{n=0}^{\infty}\frac{a^nq^{n^2/(2s)}}{(q)_n}\sum_{n=0}^{\infty}\frac{a^{-ns}	q^{n^2s/2}}{(q)_n}\sim\frac{\sqrt{s}}{1+(s-1)z_1}\exp{\left(-\frac{1}{\log(q)}\left(\frac{\pi^2}{6}+\frac{s}{2}\log^{2}(a)\right)\right)},
\end{align}
\end{theorem}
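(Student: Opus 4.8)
The plan is to apply Ramanujan's asymptotic formula (Theorem \ref{ramanujan asymptotic dilog}) to each of the two series on the left of \eqref{prodasy} separately, multiply the two asymptotic equivalences, and then collapse the pair of dilogarithms that emerges by means of Euler's reflection formula $\mathrm{Li}_2(x)+\mathrm{Li}_2(1-x)=\tfrac{\pi^2}{6}-\log x\log(1-x)$.

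First I would apply Theorem \ref{ramanujan asymptotic dilog} to $\sum_{n\ge0}a^{n}q^{n^2/(2s)}/(q)_n$ with parameters $(a,b,c)=(a,\tfrac{1}{2s},0)$. The hypotheses $a>0$ and $b>0$ hold, and the equation $az^{2b}+z-1=0$ becomes exactly $az^{1/s}+z-1=0$, whose unique positive root is $z_1\in(0,1)$ (the left-hand side is strictly increasing on $(0,\infty)$, negative at $z=0$ and positive at $z=1$). Simplifying the prefactor via $z_1+\tfrac1s(1-z_1)=\tfrac1s\bigl(1+(s-1)z_1\bigr)$ gives
\begin{align*}
\sum_{n=0}^{\infty}\frac{a^{n}q^{n^2/(2s)}}{(q)_n}\sim\frac{\sqrt s}{\sqrt{1+(s-1)z_1}}\,\exp\!\left(-\frac{1}{\log q}\left(\mathrm{Li}_2\!\bigl(az_1^{1/s}\bigr)+\frac{1}{2s}\log^2 z_1\right)\right).
\end{align*}

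Next I would apply the same theorem to $\sum_{n\ge0}a^{-ns}q^{sn^2/2}/(q)_n$ with parameters $(a^{-s},\tfrac s2,0)$; again $a^{-s}>0$ and $\tfrac s2>0$. The crucial point is that the auxiliary equation $a^{-s}w^{s}+w-1=0$ has positive root $w=az_1^{1/s}=1-z_1$: indeed $a^{-s}w^{s}=z_1$, and by the defining relation of $z_1$ one has $w-1=az_1^{1/s}-1=-z_1$, so $a^{-s}w^{s}+w-1=0$. Consequently $\mathrm{Li}_2(a^{-s}w^{s})=\mathrm{Li}_2(z_1)$, $w+s(1-w)=(1-z_1)+sz_1=1+(s-1)z_1$, and $\log w=\log a+\tfrac1s\log z_1$, which yields
\begin{align*}
\sum_{n=0}^{\infty}\frac{a^{-ns}q^{sn^2/2}}{(q)_n}\sim\frac{1}{\sqrt{1+(s-1)z_1}}\,\exp\!\left(-\frac{1}{\log q}\left(\mathrm{Li}_2(z_1)+\frac{s}{2}\Bigl(\log a+\frac1s\log z_1\Bigr)^2\right)\right).
\end{align*}

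Finally, since asymptotic equivalence is preserved under multiplication, I would multiply the two displays. The prefactors combine to exactly $\sqrt s/(1+(s-1)z_1)$. In the exponent, expanding $\tfrac s2(\log a+\tfrac1s\log z_1)^2=\tfrac s2\log^2 a+\log a\log z_1+\tfrac1{2s}\log^2 z_1$ and using Euler's reflection formula with $x=z_1$, $1-x=az_1^{1/s}$, in the form $\mathrm{Li}_2(z_1)+\mathrm{Li}_2(az_1^{1/s})=\tfrac{\pi^2}{6}-\log z_1\bigl(\log a+\tfrac1s\log z_1\bigr)$, the terms involving $\log z_1$ cancel and the exponent collapses to $\tfrac{\pi^2}{6}+\tfrac s2\log^2 a$, which is precisely the assertion. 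I do not anticipate a genuine obstacle: the one piece of real content is noticing that the second auxiliary equation has root $1-z_1$, which is exactly what makes the dilogarithm reflection formula applicable; after that the argument is routine bookkeeping, and checking the hypotheses of Theorem \ref{ramanujan asymptotic dilog} together with the legitimacy of multiplying the two asymptotic equivalences is immediate.
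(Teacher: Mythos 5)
Your proposal is correct and follows essentially the same route as the paper: apply Ramanujan's asymptotic formula (Theorem \ref{ramanujan asymptotic dilog}) to each factor with parameters $(a,\tfrac{1}{2s},0)$ and $(a^{-s},\tfrac{s}{2},0)$, observe that the second auxiliary root is $1-z_1$ (the paper phrases this as $z_1=a^{-s}z_2^{s}$ and $z_1+z_2=1$), multiply, and collapse the two dilogarithms via the reflection formula, with the same prefactor simplification $(z_1+(1-z_1)/s)(z_2+s(1-z_2))=(1+(s-1)z_1)^2/s$.
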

\begin{proof}
Letting $b=1/(2s)$ and $c=0$ in Theorem \ref{ramanujan asymptotic dilog}, we see that
\begin{align}\label{asym1}
\sum_{n=0}^{\infty}\frac{a^nq^{n^2/(2s)}}{(q)_n}\sim\frac{1}{\sqrt{z_1+(1-z_1)/s}}\exp{\left(-\frac{1}{\log(q)}\left(\textup{Li}_2(az_1^{1/s})+\frac{1}{2s}\log^{2}(z_1)\right)\right)},
\end{align}	
where $z_1$ is the positive root of $az^{1/s}+z-1=0$, and, $z_1+(1-z_1)/s>0$. Similarly, letting $b=s/2$ and $c=0$ in Theorem \ref{ramanujan asymptotic dilog} and replacing $a$ by $a^{-s}$, we have
\begin{align}\label{asym2nd}
	\sum_{n=0}^{\infty}\frac{a^{-ns}q^{n^2s/2}}{(q)_n}\sim\frac{1}{\sqrt{z_2+s(1-z_2)}}\exp{\left(-\frac{1}{\log(q)}\left(\textup{Li}_2(a^{-s}z_2^{s})+\frac{s}{2}\log^{2}(z_2)\right)\right)},
\end{align}	
where $z_2$ is the positive root of 
\begin{equation}\label{2nd equation}
a^{-s}z^{s}+z-1=0,
\end{equation}
and $z_2+s(1-z_2)>0$.

Equation \eqref{2nd equation} implies that $a^{-s}z_2^{s}$ is a positive number satisfying the equation $az^{1/s}+z-1=0$. Therefore, by uniqueness, we must have $z_1=a^{-s}z_2^{s}$. This, along with \eqref{2nd equation}, then implies $z_1+z_2=1$.

Now \eqref{asym1} and \eqref{asym2nd} yield
\begin{align}\label{prodasy1}
	\sum_{n=0}^{\infty}\frac{a^nq^{n^2/(2s)}}{(q)_n}\sum_{n=0}^{\infty}\frac{a^{-ns}	q^{n^2s/2}}{(q)_n}
	&\sim\frac{1}{\sqrt{(z_1+(1-z_1)/s)(z_2+s(1-z_2))}}\nonumber\\
	&\times\exp{\left(\tfrac{-1}{\log(q)}\left(\textup{Li}_2(az_1^{1/s})+\textup{Li}_2(a^{-s}z_2^{s})+\tfrac{1}{2s}\log^{2}(z_1)+\tfrac{s}{2}\log^{2}(z_2)\right)\right)}.
\end{align}
Next, use the functional equation \cite[p.~5, Equation (1.11)]{lewin}
\begin{equation*}
\textup{Li}_2(z)+\textup{Li}_2(1-z)=\frac{\pi^2}{6}-\log(z)\log(1-z)
\end{equation*}
while repeatedly using the facts $z_1=a^{-s}z_2^{s}$ and $z_1+z_2=1$ to obtain
\begin{align}\label{dilog simp}
&\textup{Li}_2(az_1^{1/s})+\textup{Li}_2(a^{-s}z_2^{s})+\frac{1}{2s}\log^{2}(z_1)+\frac{s}{2}\log^{2}(z_2)\nonumber\\
&=\textup{Li}_2(z_2)+\textup{Li}_2(1-z_2)+\frac{1}{2s}\log^{2}\left(a^{-s}z_2^{s}\right)+\frac{s}{2}\log^{2}(z_2)\nonumber\\
&=\frac{\pi^2}{6}-\log(z_2)\log(1-z_2)+\frac{s}{2}\left(\log(z_2)-\log(a)\right)^2+\frac{s}{2}\log^{2}(z_2)\nonumber\\
&=\frac{\pi^2}{6}-s\log(z_2)\left(\log(z_2)-\log(a)\right)+\frac{s}{2}\left(\log(z_2)-\log(a)\right)^2+\frac{s}{2}\log^{2}(z_2)\nonumber\\
&=\frac{\pi^2}{6}+\frac{s}{2}\log^{2}(a).
\end{align}
Finally, observe that
\begin{align}\label{algesim}
(z_1+(1-z_1)/s)(z_2+s(1-z_2))=(1+(s-1)z_1)^2/s.
\end{align}
The required asymptotic now follows from \eqref{prodasy1}, \eqref{dilog simp} and \eqref{algesim}.
\end{proof}
\begin{remark}
Using McIntosh's more general version of Ramanujan's Theorem \ref{ramanujan asymptotic dilog}, namely \cite[Equation (5.3)]{mcintosh}, one may calculate further terms in the asymptotic expansion of the left-hand side of \eqref{prodasy}.
\end{remark}

\begin{remark}\label{e1}
	 Letting $s=2$ in Theorem \ref{asymptotic of product theorem} shows, in particular, that as $q\to1^{-}$,
\begin{align*}
	\sum_{n=0}^{\infty}\frac{a^nq^{{n^2}/4}}{(q)_n}\sum_{m=0}^{\infty}\frac{a^{-2m}q^{m^2}}{(q)_m}\sim\frac{2\sqrt{2}}{4+a^2-a\sqrt{4+a^2}}\exp{\left\{-\frac{1}{\log(q)}\left(\frac{\pi^2}{6}+\log^2(a)\right)\right\}}.
\end{align*}
\end{remark}
%\section{Hadamard type expansions}
\begin{remark}\label{e2}
Using Theorem \ref{ramanujan asymptotic dilog}, one can similarly show that the second term on the left-hand side of \eqref{gmr} with $b=1$ satisfies the following asymptotic formula as $q\to1^{-}$:
\begin{align*}
\sum_{m=0}^{\infty}\frac{a^{-2m-1}q^{m^2+m}}{(q)_m}\sum_{n=0}^{\infty}\frac{a^nq^{{(n+1)^2}/4}}{(q)_n}\sim\sqrt{2}q^{1/4}\frac{(2+a^2-a\sqrt{4+a^2})}{(4+a^2-a\sqrt{4+a^2})}\exp{\left\{-\frac{1}{\log(q)}\left(\frac{\pi^2}{6}+\log^2(a)\right)\right\}}.
\end{align*}
\end{remark}
\begin{remark}
	The case $s=1$ of Theorem \ref{asymptotic of product theorem} was previously obtained by McIntosh \cite[Equation (8)]{mcintosh2}. Indeed, letting $c=0$ in his formula gives this special case of our Theorem \ref{asymptotic of product theorem}.
\end{remark}
\section{Another identity of Ramanujan}\label{another}
On page $26$ of the Lost Notebook \cite{lnb} (see \cite[p.~156, Entry 7.2.4]{aar2}), Ramanujan claimed
\begin{align}\label{gmr2}
\sum_{n=-\infty}^{\infty}a^nq^{n^2/4}\sum_{n=0}^{\infty}\frac{(-a)^nq^{n^2/4}}{(q)_n}-\sum_{n=-\infty}^{\infty}(-a)^nq^{n^2/4}\sum_{n=0}^{\infty}\frac{a^nq^{n^2/4}}{(q)_n}=2q^{1/4}(q)_{\infty}\sum_{m=0}^{\infty}\frac{a^{-2m-1}q^{m^2+m}}{(bq)_m}.
\end{align}
It was proved by Andrews in \cite[pp.~32-33, Theorem 2.4]{yesto}. He also showed \cite[p.~27]{yesto} that Watson's identities \cite[Equations (6), (7)]{watson}
\begin{align}\label{watson}
G(-q)\varphi(q)-G(q)\varphi(-q)&=2qH(q^4)\psi(q^2),\notag\\
H(-q)\varphi(q)+H(q)\varphi(-q)&=2G(q^4)\psi(q^2),
\end{align}
where the functions $G$, $H$, and $\varphi(q)$ are defined in \eqref{rrf defn} and \eqref{varphiq} and 
	$\psi(q)=(q^2;q^2)_{\infty}/(q;q^2)_{\infty}$
follow easily as special cases of \eqref{gmr2}. 

We now show that identities \eqref{gmr} and \eqref{gmr2} together give an identity of Ramanujan from among his forty identities as a special case. This does not seem to have been noticed before. This identity is \cite[p.~9, Entry 3.20]{berndt40}
\begin{align}\label{entry3.20}
G(q)H(-q)+G(-q)H(q)=2(-q^2;q^2)_{\infty}^2.	
\end{align} 
To see this, substitute for the theta function $\sum_{n=-\infty}^{\infty}a^nq^{n^2/4}$ occurring in \eqref{gmr2}, the left-hand side of \eqref{gmr} with $b=1$, and do the same for $\sum_{n=-\infty}^{\infty}(-a)^nq^{n^2/4}$ by letting $b=1$ and replacing $a$ by $-a$ in \eqref{gmr}. Together, this yields, after simplification,
\begin{align*}
	\sum_{n=0}^{\infty}\frac{a^nq^{(n+1)^2/4}}{(q)_n}\sum_{n=0}^{\infty}\frac{(-a)^nq^{n^2/4}}{(q)_n}+\sum_{n=0}^{\infty}\frac{(-a)^nq^{(n+1)^2/4}}{(q)_n}\sum_{n=0}^{\infty}\frac{a^nq^{n^2/4}}{(q)_n}=2q^{1/4}.
\end{align*}
Now let $a=1$ (or $-1$) in the above identity, replace $q$ by $q^{4}$, and then use Rogers' identities \eqref{rogers identities1} and \eqref{rogers identities2}, once as they are, and another time with $q$ replaced by $-q$, to arrive at \eqref{entry3.20}.

\section{Concluding remarks}\label{cr}
The result in Theorem \ref{theorem2}  not only generalizes Ramanujan's \eqref{gmr} but also gives, in particular, its natural proof. That being said, there is still some air of mystery about why Ramanujan thought there should exist such an elegant representation of the Jacobi theta function in terms of generalized Rogers-Ramanujan functions. Asymptotics of both sides of \eqref{gmr} could very well be a plausible answer, which is a possibility considered, also, for some of Ramanujan's identities involving the Rogers-Ramanujan functions \cite[p.~3]{berndt40}. 
	
Indeed, Remarks \ref{e1} and \ref{e2} together imply that as $q\to 1^{-}$,
\begin{align}\label{gmrlhs}
	&\sum_{m=0}^{\infty}\frac{a^{-2m}q^{m^2}}{(q)_m}\sum_{n=0}^{\infty}\frac{a^nq^{{n^2}/4}}{(q)_n}+\sum_{m=0}^{\infty}\frac{a^{-2m-1}q^{m^2+m}}{(q)_m}\sum_{n=0}^{\infty}\frac{a^nq^{{(n+1)^2}/4}}{(q)_n}\nonumber\\
&\sim\sqrt{2}\frac{\left(2+q^{1/4}(2+a^2-a\sqrt{4+a^2})\right)}{(4+a^2-a\sqrt{4+a^2})}\exp{\left\{-\frac{1}{\log(q)}\left(\frac{\pi^2}{6}+\log^2(a)\right)\right\}}\notag\\
&\sim\sqrt{2}\exp{\left\{-\frac{1}{\log(q)}\left(\frac{\pi^2}{6}+\log^2(a)\right)\right\}}.
	\end{align}
Now using \cite[Theorem 2]{mcintosh3}, we infer that as $q\to1^{-}$,
\begin{align}\label{poch asy}
\frac{1}{(q;q)_{\infty}}\sim\sqrt{\frac{-\log(q)}{2\pi}}\exp{\left(-\frac{\pi^2}{6\log(q)}\right)}.
\end{align}
Moreover, by observing that the $n=0$ term on the right-hand side of the transformation formula \cite[p.~38, Equation (2.2.5)]{borweins}
\begin{equation*}
\sum_{n=-\infty}^{\infty}e^{2\pi inx}e^{-\pi n^2/w}=\sqrt{w}\sum_{n=-\infty}^{\infty}e^{-w(n+x)^2\pi}\hspace{7mm}(\textup{Re}(w)>0)
\end{equation*}
dominates the rest, we find that as $w\to\infty$,
  \begin{equation*}
  	\sum_{n=-\infty}^{\infty}e^{2\pi inx}e^{-\pi n^2/w}\sim\sqrt{w}e^{-wx^2\pi},
\end{equation*}
which can be recast in the form
\begin{align}\label{theta asy}
\sum_{n=-\infty}^{\infty}a^nq^{n^2/4}\sim2\sqrt{\frac{\pi}{-\log(q)}}\exp{\left(-\frac{\log^2(a)}{\log(q)}\right)}
\end{align}
as $q\to1^{-}$. 
Combining \eqref{poch asy} and \eqref{theta asy}, we see that as $q\to1^{-}$,
\begin{align}\label{gmr rhs}
	\frac{1}{(q;q)_{\infty}}\sum_{n=-\infty}^{\infty}a^nq^{n^2/4}\sim\sqrt{2}\exp{\left\{-\frac{1}{\log(q)}\left(\frac{\pi^2}{6}+\log^2(a)\right)\right\}},
\end{align}
which is the same asymptotic we obtained in \eqref{gmrlhs}! For someone with Ramanujan's acumen, identifying the asymptotic expression in \eqref{gmrlhs} to be that of the left-hand side of \eqref{gmr rhs} would be child's play. This must have led Ramanujan to consider if the left-hand sides of \eqref{gmrlhs} and \eqref{gmr rhs} are equal, in general, for $|q|<1$. However, the genius of Ramanujan goes beyond this as he accommodates one more variable $b$, leading to \eqref{gmr}, so that what we just inferred through the asymptotic analysis is only the special case $b=1$ of \eqref{gmr}!

Once such a multi-parameter identity is found, it is bound to have interesting corollaries, and as mentioned in the introduction (or in \cite{yesto}), we get \eqref{mre}, \eqref{gh5mock} as its special cases. A companion identity of Ramanujan (this time \emph{without} an additional parameter $b$), namely \eqref{gmr2}, has the identities in \eqref{watson} as its special cases. Also, as shown in Section \ref{another}, \eqref{gmr} and the companion identity \eqref{gmr2} together give yet another of Ramanujan's formulas involving the Rogers-Ramanujan functions $G(q)$ and $H(q)$, that is, \eqref{entry3.20}.

After finding two algebraic relations between $G(q)$ and $H(q)$ in \cite{ramanujan algebraic} (see also \cite[p.~231]{cp}), which were the firsts-of-their-kind results back then, Ramanujan wrote `\emph{Each of these formulae is the simplest of a large class}'. It is often believed that the `\emph{large class}' in Ramanujan's remark is the set of forty identities for $G(q)$ and $H(q)$ found by Ramanujan; see, for example, \cite[p.~179]{huang}. This may very well be true. But in light of the fact that \eqref{gmr} and \eqref{gmr2} lead to some of these identities as corollaries, could it be that Ramanujan is referring to such generalized modular relations when he says `\emph{large class}'? If so,  undertaking the task of searching identities analogous to \eqref{gmr} which yield at least some other identities among Ramanujan's set of forty identities is definitely worthwhile. 

Our Theorem \ref{theorem2} shows that theta function can be dissected as  a sum of $s$ number of products of  generalized Rogers-Ramanujan functions, where $s$ is \emph{any} natural number. While $s=2$ of our result leads to Ramanujan's \eqref{gmr}, the case $s=1$ gives a generalization of the Jacobi triple product identity. For $s>2$, however, one transcends the realms of modular functions. Nevertheless, the identities that result for these higher $s$ are interesting in their own right and deserve further study. 

If we let $s=1, 2$ in Theorem \ref{theorem2}, two distinct behaviors are noted for $a=b=1$ and for $a=b=-1$. While the former is associated with the modular world, the latter dwells in the mock modular. Indeed, for $s=1$ and $a=b=-1$, we get Corollary \ref{mock theta gen} whose special case is an identity between two of Ramanujan's third order mock theta functions whereas  for $s=2$ and $a=b=-1$ leads to the identity \eqref{gh5mock} involving two fifth order mock theta functions, as shown by Andrews \cite[p.~28, Equation (2.3.11)]{yesto}. However, the chain seems to break for the very next value of $s$ in that the functions involved in the special case $s=3$ and $a=b=-1$ do not seem to be related to any of the seventh order mock theta functions.

Let $s>2$ in \eqref{theorem 2 eqn}. After Corollary \ref{s=3case}, we offered some evidence on the possible non-existence of the analogues of Rogers' identities \eqref{rogers identities1} and \eqref{rogers identities2} which could possibly be then used to write \eqref{theorem 2 eqn} with $a=b=1$ in a form similar to \eqref{mre}. In particular, it looks doubtful that the two series we get after letting $a=b=1$ in the first expression on the left-hand side of \eqref{s=3mre}, namely, $\sum_{m=0}^{\infty}q^{3m^2/2}/(q)_m$ and $\sum_{n=0}^{\infty}q^{n^2/6}/(q)_n$, are related in a similar way as the functions $\sum_{m=0}^{\infty}q^{m^2}/(q)_m$ and $\sum_{n=0}^{\infty}q^{{n^2}/4}/(q)_n$ are related through \eqref{rogers identities1}. We offer one more piece of evidence below to show why the possibility of there being a relation between the series $\sum_{m=0}^{\infty}q^{3m^2/2}/(q)_m$ and  the sum\footnote{Note that here we have replaced $q$ by $q^6$ in $\sum_{n=0}^{\infty}q^{n^2/6}/(q)_n$ which is analogous to how $q$ was replaced by $q^4$ in $\sum_{n=0}^{\infty}q^{n^2/4}/(q)_n$ to get one of the two series in \eqref{rogers identities1}.} $\sum_{n=0}^{\infty}q^{n^2}/(q^6;q^6)_n$ is bleak.

Consider, first, the case of $\sum_{m=0}^{\infty}q^{m^2}/(q)_m$ and $\sum_{n=0}^{\infty}q^{{n^2}}/(q^4;q^4)_n$. We have $u=\frac{1}{2}(\sqrt{5}-1)$ as the positive root of $z^2+z-1=0$. Then using Theorem \ref{ramanujan asymptotic dilog}, we see that as $q\to1^{-}$, 
\begin{align}\label{ri11}
\frac{\sum_{m=0}^{\infty}q^{m^2}/(q)_m}{\sum_{n=0}^{\infty}q^{{n^2}}/(q^4;q^4)_n}\sim\frac{1}{\sqrt{2}}\exp{\left\{-\frac{1}{\log(q)}\left(\textup{Li}_2(1-u)+\log^{2}(u)-\frac{1}{4}\textup{Li}_2(u)-\frac{1}{16}\log^{2}(1-u)\right)\right\}},
\end{align}
The well-known evaluations of the dilogarithm at $u$ and $1-u$ \cite[p.~7, Equation (1.20)]{lewin} give
\begin{equation}\label{dilogi1}
\textup{Li}_2(1-u)-\frac{1}{4}\textup{Li}_2(u)+\log^{2}(u)-\frac{1}{16}\log^{2}(1-u)=\frac{\pi^2}{24}.
\end{equation}
But from \cite[Theorem 2]{mcintosh2}, one has
\begin{equation}\label{ri12}
(-q^2;q^2)_{\infty}=\frac{1}{(q^2;q^4)_{\infty}}\sim\frac{1}{\sqrt{2}}\exp{\left\{-\frac{\pi^2}{24\log(q)}\right\}}.
\end{equation}
From \eqref{ri11}, \eqref{dilogi1} and \eqref{ri12}, we have derived Rogers' identity \eqref{rogers identities1} asymptotically as $q\to1^{-}$.

Analogously, Theorem \ref{ramanujan asymptotic dilog} can be used to show that if $v$ is the positive root of $z^3+z-1=0$, then as $q\to1^{-}$,
\begin{align}\label{3case}
	\frac{\sum_{m=0}^{\infty}q^{\frac{3}{2}m^2}/(q)_m}{\sum_{n=0}^{\infty}q^{{n^2}}/(q^6;q^6)_n}&\sim\frac{1}{\sqrt{3}}\exp{\left\{-\frac{1}{\log(q)}\left(\textup{Li}_2(1-v)+\frac{3}{2}\log^{2}(v)-\frac{1}{6}\textup{Li}_2(v)-\frac{1}{36}\log^{2}(1-v)\right)\right\}}\notag\\
	&\sim\frac{1}{\sqrt{3}}\exp{\left\{-\frac{1}{\log(q)}\left(\textup{Li}_2(1-v)+\frac{5}{4}\log^{2}(v)-\frac{1}{6}\textup{Li}_2(v)\right)\right\}}.
\end{align}
Now one needs an identity to simplify the expression in the parantheses of \eqref{3case}. It is highly unlikely that an identity yielding something similar  to \eqref{dilogi1} exists. The \texttt{FindIntegerNullVector} command in \emph{Mathematica} suggests\footnote{We are indebted to James McLaughlin \cite{mclaughlin} who showed us this result.} the formula
\begin{align*}
6\textup{Li}_2(v)+36\textup{Li}_2(v^2)-30\textup{Li}_2(v^3)-18\textup{Li}_2(v^4)=0,
\end{align*}
which, using the identity \cite[p.~6, Equation (1.15)]{lewin} $\textup{Li}_2(x^2)=2\left(\textup{Li}_2(x)+\textup{Li}_2(-x)\right)$, reduces to 
\begin{equation*}
	6\textup{Li}_2(v)-30\textup{Li}_2(1-v)-36\textup{Li}_2(-v^2)=\pi^2.
\end{equation*}
Using the standard identities for dilogarithms \cite[pp.~4-5, Equations (1.11), (1.12)]{lewin}, it is easy to show that the above identity is actually true. However, employing it in \eqref{3case} does not reduce the expression in parantheses to a constant multiple of $\pi^2$ since it still \emph{does} depend on $v$. This suggests that it is unlikely that there is a relation between $\sum_{m=0}^{\infty}q^{\frac{3}{2}m^2}/(q)_m$ and $\sum_{n=0}^{\infty}q^{{n^2}}/(q^6;q^6)_n$ similar to \eqref{rogers identities1}. \\

We conclude this section by mentioning problems for future work.\\

\noindent
(1) Does equation \eqref{theorem 2 eqn} represent a physical phenomena? Observe that the $s=2$ case of \eqref{theorem 2 eqn}, that is, \eqref{gmr} along with its companion \eqref{gmr2} vaguely resemble a pair of inversion formulas. Does \eqref{theorem 2 eqn cor2} hint at some homogeneous system? As vague as they sound, these questions are motivated by the fact that the Rogers-Ramanujan functions and Rogers-Ramanujan identities turn up in umpteen areas of Mathematics and Science; see, for example,  \cite{baxter}, \cite{ono-duke}. \\

\noindent
(2)  Can the companion identity \eqref{gmr2} be extended in a similar manner as \eqref{gmr} was generalized in this paper to \eqref{theorem 2 eqn}?\\

\noindent
(3) Do there exist analogues of \eqref{gmr}, more generally, of \eqref{theorem 2 eqn}, where one has sum of products of \emph{three or more} generalized Rogers-Ramanujan functions? This question stems from the fact that there exist identities for sums or differences of products of quadruples of Rogers-Ramanujan functions $G(q)$, $H(q)$. See Robin's thesis \cite[p.~16, Equation (1.27)]{robins}, a recent paper of Channabasavayya, Keerthana and Ranganatha \cite[Theorem 3.1]{ranganatha}, and another recent paper of of Gugg \cite[Theorem 4.1]{gugg}. In light of the fact that \eqref{mre} was a special case of \eqref{gmr}, it is natural to ask if these new relations involving triples or quadruples also admit generalizations like \eqref{gmr}. Perhaps an asymptotic analysis such as the one done at the beginning of this section is the best bet to search for such results. It would be interesting to see what we get on the right-hand sides of such results, provided, of course, they exist.\\
	
\noindent	
(4) Let $a=b=1$ in the series $\sum_{m=0}^{\infty}a^{-sm}q^{sm(m-1)/2+m(k+s/2)}/(bq)_{m}$ occurring in Theorem \ref{theorem2}, where $s\in\mathbb{N}$ and $0\leq k\leq s-1$. Note that the special case $s$ even and $k\leq s/2$ of the resulting series is contained in the series considered by Bressoud, Santos and Mondek \cite[Theorem 1]{bressoud2}. As shown by them, three different restricted partitions have the latter as their generating function. Hence studying the remaining cases of our aforementioned series, that is, those not falling under the purview of the series of Bressoud, Santos and Mondek so as to extract partition-theoretic information from seems to be a worthwhile task.

%Is there any connection of \eqref{involving identity from stacks} with the four identities involving the sum
%Another thing to note is that one can split the right-hand side of \eqref{before congruence}  into two sums based on the parity of $n$, and then employ Lemmas 1 and 2 from \cite{andrews-stacks} (with $\alpha=0$), to obtain, after a fair amount of calculation, the expression
% \begin{align}
%&2\sum_{m=0}^{\infty}\frac{q^{m(2m+1)}}{(q^2;q^2)_{2m}}-2\sum_{\ell=1}^{\infty}\sum_{m=0}^{\infty}\frac{(-1)^{\ell}q^{2(\ell+m)^2-(\ell+m)+2\ell^2}}{(q^2;q^2)_{\ell+m-1}(q^4;q^4)_{m}},
%\end{align}

\begin{center}
\textbf{Acknowledgements}
\end{center}
The authors would like to thank George E. Andrews, Richard McIntosh and James McLaughlin for helpful discussions. The first author is supported by the Swarnajayanti Fellowship grant SB/SJF/2021-22/08 of ANRF (Government of India). He sincerely thanks the agency for the support. The second author is a Sabarmati Bridge Fellow at IIT Gandhinagar. He thanks the institute for the support.

\end{document}